\newcommand{\rr}{\mathbb{R}}
\newcommand{\BDM}{\textrm{BDM}}
\renewcommand{\P}{\mathbb{P}}
\newcommand{\mesh}{\mathcal{C}_h}
\newcommand{\facets}{\mathcal{F}_h}
\newcommand{\jump}[1]{ {[\![ #1 ] \!]}}
\renewcommand{\div}{\operatorname{div}}
\renewcommand{\hat}[1]{\widehat{#1}}
\newcommand{\hot}{\operatorname{hot}}
\newcommand{\eff}{\operatorname{eff}}
\newcommand{\id}{\operatorname{id}}
\newcommand{\ds}{\operatorname{ds}}
\newcommand{\dx}{\operatorname{dx}}
\newcommand{\meannorm}[1]{(| u |_{r+1} + |u|_{r'+1})}
\newcommand{\Isigstar}{I^*_h}
\newcommand{\Isig}{I_h}
\newcommand{\numeoc}[1]{\num[round-precision=1,round-mode=places, scientific-notation=false]{#1}}
\newcommand{\numnr}[1]{\num[round-precision=2,round-mode=places, scientific-notation=false]{#1}}
\newcommand{\numval}[1]{\num[]{#1}}
\definecolor{mygray}{gray}{0.95}
\newtheorem{theorem}{Theorem}
\newtheorem{lemma}{Lemma}
\newtheorem{corollary}{Corollary}
\newtheorem{remark}{Remark}
\begin{document}

\title[A posteriori error estimates for mixed Laplace
eigenvalue problems]{A note on asymptotically exact a posteriori error estimates for
mixed Laplace eigenvalue problems
}

\author[P.~L.~Lederer]{Philip L. Lederer}
\address{Department of Mathematics and Systems Analysis, Aalto University,
Otakaari 1, Espoo, Finland}
\email{philip.lederer@aalto.fi}


\maketitle

\begin{abstract}
We derive optimal and asymptotically exact a posteriori error
estimates for the approximation of the Laplace eigenvalue problem. To
do so, we combine two results from the literature. First, we use the
hypercircle techniques developed for mixed eigenvalue approximations
with Raviart-Thomas Finite elements. In addition, we use the
post-processings introduced for the eigenvalue and eigenfunction based
on mixed approximations with the Brezzi-Douglas-Marini Finite element.
To combine these approaches, we define a novel additional local
post-processing for the fluxes that appropriately modifies the
divergence. Consequently, the new flux can be used to derive upper
bounds and still shows good approximation properties. Numerical
examples validate the theory and motivate the use of an adaptive mesh
refinement.

\keywords{A posteriori error analysis \and Mixed Laplace eigenvalue problem \and Prager-Synge \and Brezzi-Douglas-Marini Finite Element}
\end{abstract}

\section{Introduction}
\label{intro}

In many examples from physics to industrial applications, the solution
of eigenvalue problems plays an essential role. Similar as for
standard source problems, the Finite element method seems to be a very
promising method to discretize these problems due to its flexibility
and good approximation properties. Numerous works deal with the
analysis in general frameworks where issues such as stability,
convergence properties and a priori error estimates are discussed, see
\cite{BABUSKA1991641,MR2652780}. 

Since in general one can not assume high regularity of the
eigenfunctions on arbitrary domains \cite{MR1469972}, the requirement
for an adaptive mesh refinement strategy is obvious. Central to this
approach is the derivation of an efficient and reliable a posteriori
error estimator, as already developed for Finite element methods in
general \cite{MR1217437,MR3059294}, and for eigenvalue
problems in particular in \cite{MR1998821}.

In this work we consider the Laplace eigenvalue problem and
approximate it using a mixed method, see \cite{MR606505,
MR1722056,MR3097958}. By that we get access to the hypercircle theory,
see \cite{MR25902, MR701093}, eventually leading to asymptotically
exact upper bounds and local efficiency. However, unlike for standard
source problems, see \cite{MR2945185, MR2373174,MR2684353, MR3335498,
HSV}, a more profound approach is needed since the orthogonality of
the corresponding errors is no longer exactly satisfied. 

For eigenvalue problems this was first introduced in the work
\cite{MR4080229}, by means of the Raviart-Thomas Finite element. 
To discuss details, note that we have
\begin{align} \label{eq::psintro}
  \| \sigma_h - \sigma \|_0^2 + \| \nabla (u - u_h^{**}) \|_0^2 &= \| \sigma_h - \nabla u_h^{**}\|_0^2 - 2(\sigma_h - \sigma, \nabla(u - u_h^{**}) ),
\end{align}
where $\lambda, u, \sigma$ are the eigenvalue, eigenfunction and its
gradient, $\lambda_h, u_h, \sigma_h$ are the corresponding
approximations and $u_h^{**}$ denotes some $H^1$-conforming
post-processed function of $u_h$. The first term on the right-hand
side of \eqref{eq::psintro} is computable and can therefore be used to
define an a posteriori estimator $\eta$. The astonishing observation
in \cite{MR4080229} was then that in the case of an approximation
using the Raviart-Thomas Finite element, the second term $ 2(\sigma_h
- \sigma, \nabla(u - u_h^{**}) )$ converges of higher order.
Consequently, $\eta$ is an asymptotically exact upper bound for the
errors on the left-hand side of \eqref{eq::psintro}.

Unfortunately, the method of \cite{MR4080229} has the drawback of a
reduced accuracy of the eigenvalue and the eigenfunction since the
Raviart-Thomas space does not allow an optimal approximation. 
In \cite{bdmest} (using ideas from \cite{MR3864690}) the same authors
(and collaborators) were able to achieve an optimal approximation by
using the Brezzi-Douglas-Marini (BDM) Finite element instead. However,
this was only possible by paying the price of unknown constants in the
a posteriori estimates since the additional term in
\eqref{eq::psintro} is not of higher order any more as was also
observed in \cite{Bertrand2019APE}.

The goal of this work is to combine the advantages from both works.
More precisely, we use the BDM Finite Element and the post-processing
techniques for the eigenvalue and the eigenfunction as in
\cite{bdmest}, and consider modifications of the approaches from
\cite{MR4080229} to derive asymptotically exact upper bounds. For the
latter, we introduce an additional (local) post-processing for the
flux variable $\sigma_h$, where we correct its divergence to fit the
additional term in~\eqref{eq::psintro}, which consequently
converges again with higher order.

The rest of the paper is organized as follows.
Section~\ref{sec::problem} discusses the problem setting and its
approximation. In section~\ref{sec::localpost} we present the local
post-processing technique for the eigenfunction and the eigenvalue.
The main results are then discussed in section~\ref{sec::aposteriori}.
While we first recapture the standard a posteriori error analysis
based on \eqref{eq::psintro} and reveal its breakdown due to
a slow convergence of the additional terms, we then introduce the
novel post-processing of the flux and derive the asymptotically exact
upper bound. In the last section~\ref{sec::numex} we present two
numerical examples to validate our findings. The appendix, see
section~\ref{sec::appendix}, considers some additional results needed
in the analysis.

\section{Problem setting} \label{sec::problem}

Let $\Omega \subset \rr^d$ be a polygon or polyhedron for $d = 2, 3$,
respectively. We consider the mixed formulation of the Laplace
eigenvalue problem with homogeneous Dirichlet boundary conditions,
i.e. we want to find a $\lambda \in \rr, u \in L^2(\Omega)$ and
$\sigma \in H(\div, \Omega)$ such that 
\begin{subequations} \label{eq::mixedeq}
  \begin{alignat}{2} 
  -(\sigma, \tau) - (\div\tau, u) &= 0 &&\quad \forall \tau \in H(\div, \Omega),\label{eq::mixedeq_one}\\
  - (\div\sigma, v) &= \lambda (u,v)  &&\quad \forall v \in L^2(\Omega).\label{eq::mixedeq_two}
  \end{alignat}    
\end{subequations}
We approximate \eqref{eq::mixedeq} by a mixed method using the $\BDM$
Finite element for the approximation of $\sigma$ and a piece-wise
polynomial approximation of $u$. To this end let $\mesh$ be a regular
triangulation of $\Omega$ into triangles and tetrahedrons in two and
three dimensions respectively. Let $k \ge 1$ be a fixed integer (see
Remark~\ref{rem::lowestorder} for a comment regarding the lowest order
case). We introduce the spaces
\begin{align*}
  U_h &:= \{v_h \in L^2(\Omega): v_h|_K \in \P^k(K) ~\forall K \in \mesh \},\\
  \Sigma_h &:=  \{\tau_h \in H(\div, \Omega): {\tau_h}|_K \in \P^{k+1}(K, \rr^d) ~\forall K \in \mesh \},
\end{align*}
where $\P^l(K)$ denotes the space of polynomials of order $l \ge 0$ on
$K$, and $\P^l(K, \rr^d)$ denotes the corresponding vector-valued
version. An approximation of \eqref{eq::mixedeq} then seeks $\lambda_h
\in \rr$, $u_h \in U_h$ and $\sigma_h \in \Sigma_h$ such that
\begin{subequations} \label{eq::mixedeqapprox}
  \begin{alignat}{2} 
  -(\sigma_h, \tau_h) - (\div\tau_h, u_h) &= 0 &&\quad \forall \tau_h \in \Sigma_h,\label{eq::mixedeqapprox_one}\\
  - (\div\sigma_h, v_h) &= \lambda_h (u_h,v_h)  &&\quad \forall v_h \in U_h. \label{eq::mixedeqapprox_two}
  \end{alignat}    
\end{subequations}
Review article \cite{MR2652780} (for example) states that problem
\eqref{eq::mixedeqapprox} defines a well approximation of the
continuous eigenvalue problem \eqref{eq::mixedeq} in the sense that it
does not produce any spurious modes and that eigenfunctions are
approximated with the proper multiplicity. The approximation results
are summarized in the following. To this end let $s > 1/2$ and let
$(\lambda, u, \sigma)$ be a solution of the eigenvalue problem
\eqref{eq::mixedeq} with the regularity $u \in H^{1 + s}(\Omega)$ and
$\sigma \in H(\div, \Omega) \cap H^s(\Omega, \rr^d)$ (for the
regularity results see \cite{MR2559736,MR775683}). Then there exists a
discrete solution of \eqref{eq::mixedeqapprox}  such that
\begin{subequations} \label{eq::approx_primal}
\begin{align} 
  \| u - u_h \|_0 &\lesssim h^{r} | u |_{r+1}, \label{eq::approx_primal_two}\\
  \| \sigma - \sigma_h \|_0 &\lesssim h^{r'} | u |_{r'+1},\label{eq::approx_primal_three} \\
  \| \div (\sigma - \sigma_h) \|_0 &\lesssim h^{r} \meannorm{u}, \label{eq::approx_primal_two_div}
\end{align}
\end{subequations}
where $r = \min\{s, k+1\}$ and $r' = \min\{s, k+2\}$.
If
$s$ is big enough we have $r' = r+1$. Above estimates follow from the
abstract theory from \cite{MR2652780}, \cite{MR1722056} and
\cite{MR606505}, and the approximation results of the source problem,
see \cite{MR3097958}. In addition we have 
\begin{align} \label{eq::approx_primal_grad}
  \| u - u_h \|_{1,h} \lesssim h^{r-1} | u |_{r+1}, 
\end{align}
where 
\begin{align*}
  \| u - u_h \|_{1,h}^2 := \sum_{K \in \mesh} \| \nabla(u
- u_h) \|_{0,K}^2 +  \sum_{F \in \facets} \frac{1}{h_F} \|
\jump{u_h} \|_{0,F}^2.
\end{align*} 
Here $\jump{\cdot}$ denotes the standard jump
operator, $\facets$ the set of facets of the triangulation $\mesh$,
and $h_F$ the diameter of a facet $F \in \facets$.
 Note that above results demand a careful choice of the approximated
eigenfunction $u_h$ and the approximated gradient $\sigma_h$. An
example, well established in the literature, is given by a
normalization such that $\| u_h \|_0 = \| u \|_0 = 1$ and by choosing
the sign $(u,u_h) > 0$. Note that this also fixes $\sigma$ and
$\sigma_h$ by \eqref{eq::mixedeq_one} and
\eqref{eq::mixedeqapprox_one}, respectively. The case of eigenvalues
with a higher multiplicity demands more carefulness, particularly if
an a posteriori analysis is considered, and we refer to
\cite{MR2652780, MR3647956} for more details. For simplicity, we
assume for the rest of this work that $\lambda$ is a simple eigenvalue
and that the above choice of sign and scaling of the continuous and
the discrete eigenfunctions is applied. Further, for simplicity, we
will call $(\lambda, u, \sigma)$ \textbf{the solution} of
\eqref{eq::mixedeq}, keeping in mind that a different scaling and sign
can be chosen. 

\begin{remark} \label{rem::lowestorder} Although the schemes proposed
  in this work are computable also for the lowest order case $k=0$,
  one does not observe any high-order convergence of the post
  processed variables defined later in the work. The reason for this
  is that the Aubin-Nitsche technique, needed in the analysis, can not
  be applied for this case.
\end{remark}

\section{Local post-processing for $u_h$ and $\lambda_h$} \label{sec::localpost}

For a sufficiently smooth solution, estimates
\eqref{eq::approx_primal} and \eqref{eq::approx_primal_grad} show that
there is a gap of two between the order of convergence of $\| \sigma -
\sigma_h \|_0$ and $\| u - u_h \|_{1,h}$. 
In \cite{MR1722056} the folloowing identity is proven
\begin{align} \label{eq::lamminlamh}
  \lambda - \lambda_h = \| \sigma - \sigma_h\|_0^2 - \lambda_h \| u - u_h \|_0^2.
\end{align}
which, due to \eqref{eq::approx_primal}, gives the estimate (using $r
\le r'$)
\begin{align} 
  | \lambda - \lambda_h | &\lesssim h^{2r} | u |_{r+1} + h^{2r'} | u |_{'r+1} \lesssim h^{2r} \meannorm{u} \label{eq::approx_primal_one}.
\end{align}
We see that the order of convergence of $| \lambda - \lambda_h |$ is
dominated by the order of the $L^2$-error of the eigenfunction.
The reduced convergence of $u_h$ (compared to the $L^2$-error of
$\sigma$) is well known for mixed methods and can be improved by means
of a local post-processing, see \cite{AB, PPRAIRO}, and
particularly for eigenfunctions in \cite{MR2669393}. Consequently,
using the ideas from \cite{MR3864690}, we can then also get an
improved eigenvalue. 

For a given integer $l \ge 0$ let $\Pi^l$ denote the $L^2$-projection
onto element-wise polynomials of order $l$.
Consider the spaces
\begin{align*}
  U_h^* := \{v_h \in L^2(\Omega): v_h|_K \in \P^{k+2}(K) ~\forall K \in \mesh \}, \quad \textrm{and} \quad 
  U_h^{**} := U_h^* \cap H^1_0(\Omega),
\end{align*} 
then we define $u^*_h \in U_h^*$ by
\begin{subequations}\label{eq::post}
  \begin{alignat}{2} 
    (\nabla u_h^*, \nabla v_h^*)_K &= (\sigma_h, \nabla v^*_h)_K &&\quad \forall v_h^* \in (\id - \Pi^k)|_K \P^{k+2}(K), \forall K \in \mesh,\label{eq::post_one}\\
    \Pi^k u_h^* & = u_h.\label{eq::post_two}
  \end{alignat}
\end{subequations}
For the discretization of the standard source problem (i.e. the
Poisson problem), it is known that the kernel inclusion property $\div
\Sigma_h  \subseteq U_h$ (see \cite{MR3097958}) and commuting
interpolation operators yield a super convergence property of the
projected error $\| \Pi^k u - u_h \|_0$ given by
$\rho(h)\mathcal{O}(h^{r'})$. Here $\rho(h)$ is a function that
depends on the regularity of the problem and for which we have
$\rho(h) \rightarrow 0$ as $h \rightarrow 0$. For convex domains we
have $\rho(h) = \mathcal{O}(h)$.
This super convergence of
the projected error is the fundamental ingredient to derive the
enhanced approximation properties of $u_h^*$.

Unfortunately the same technique, i.e. the one from the standard
source problem, does not work for the eigenvalue problem and an
improved convergence estimate of $\| \Pi^k u - u_h \|_0$ is more
involved. This has been discussed for the lowest order case in
\cite{MR2559736}, for a more general setting
including eigenvalue clusters in \cite{MR3647956}, for Maxwell
eigenvalue problems in \cite{MR3918688} and for the Stokes problem for
example in \cite{MR3864690}. Unfortunately, these results are only
presented using the full $\| \cdot \|_{\div}$-norm (or the
corresponding mixed norm) for $\Sigma$ and $\Sigma_h$. While such an
estimate is applicable for an approximation of
\eqref{eq::mixedeqapprox} using Raviart-Thomas Finite elements, the
$\BDM$ case is not covered since \eqref{eq::approx_primal_two_div} and
\eqref{eq::approx_primal_three} show different convergence orders
which would spoil the estimate.  As the author is not aware of a
detailed proof that can be found in the literature, it will be given
in the appendix in section~\ref{sec::appendix}. Note however, that
these results are already used for example in \cite{bdmest} (without
proof). The resulting super convergence reads as 
\begin{align} \label{eq::super}
  \| \Pi^k u - u_h \|_0 \lesssim \rho(h)( h \| u - u_h \|_0 + \| \sigma - \sigma_h\|_0),
\end{align} 
which in combination with the techniques from \cite{PPRAIRO}, then
yield the approximation properties (see also \cite{ESpaper})
\begin{subequations} \label{eq::approxpost}
\begin{align} 
  \| u - u_h^* \|_0 &\lesssim \rho(h) h^{r'} \meannorm{u}, \\
   \| u - u_h^*\|_{1,h} &\lesssim h^{r'} \meannorm{u}.
\end{align}
\end{subequations}
Since $u_h^*$ is not $H^1$-conforming the final post-processing step
consists of the application of an averaging operator $I^a: U_h^*
\rightarrow U_h^{**}$ often also called Oswald operator, see
\cite{MR1248895} and \cite{DiPietroErn} for details on the
approximation properties. We set $u_h^{**} := I^a(u_h^*)$ for which we
have by \eqref{eq::approxpost}
\begin{subequations} \label{eq::approxpostpost}
  \begin{align}
    \| u - u_h^{**} \|_0 &\lesssim \rho(h) h^{r'} \meannorm{u}, \\ 
 \| \nabla (u - u_h^{**}) \|_0 &\lesssim h^{r'} \meannorm{u}.
  \end{align}
  \end{subequations}
We conclude this section by introducing a post-processing of the
eigenvalue. As in \cite{MR3864690,bdmest} we define 
\begin{align} \label{eq::lambdapost}
  \lambda_h^* := \frac{(\div \sigma_h, u_h^*)}{(u_h^*,u_h^*)}.
\end{align}
The following lemma was given in \cite{MR3864690}. Since we need some
intermediate steps in the sequel, we include the proof. 
\begin{lemma} \label{lem::lampostapriori} Let $s > 1/2$ and let
  $(\lambda, u, \sigma)$ be the solution of \eqref{eq::mixedeq} with
  the regularity $u \in H^{1 + s}(\Omega)$ and $\sigma \in H(\div,
  \Omega) \cap H^s(\Omega, \rr^d)$. Further let $\| u_h^* \|_0 \neq
  0$. There holds
  \begin{align*}
    | \lambda - \lambda_h^* | \lesssim (\rho(h) h^{r'+r} + h^{2r'}) \meannorm{u},
  \end{align*}
  where $r = \min\{s,k+1\}$ and $r' = \min\{s,k+2\}$.
\end{lemma}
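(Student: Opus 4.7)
My plan is to derive a Prager-Synge-type identity for $\lambda - \lambda_h^*$ and then combine a cancellation of leading-order contributions with the super-convergence of $u_h^*$ from \eqref{eq::approxpost}.

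First I would evaluate the numerator of $\lambda_h^*$ exactly. Two facts cooperate: on every $K \in \mesh$ one has $\div \sigma_h|_K \in \P^k(K)$, and by \eqref{eq::post_two} $\Pi^k u_h^* = u_h$; hence element-wise $(\div \sigma_h, u_h^*)_K = (\div \sigma_h, u_h)_K$. Summing and invoking \eqref{eq::mixedeqapprox_two} with $v_h = u_h$ gives $(\div \sigma_h, u_h^*) = -\lambda_h$ (using $\|u_h\|_0 = 1$). Consequently $\lambda_h^* \|u_h^*\|_0^2 = \lambda_h$ and the error collapses to
\[
(\lambda - \lambda_h^*)\|u_h^*\|_0^2 \;=\; (\lambda - \lambda_h) + \lambda\bigl(\|u_h^*\|_0^2 - \|u\|_0^2\bigr).
\]

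Next I would expand the norm difference by polarization, $\|u_h^*\|_0^2 - \|u\|_0^2 = \|u_h^* - u\|_0^2 + 2(u, u_h^* - u)$, and split $(u, u_h^* - u) = (u - u_h, u_h^* - u) + (u_h, u_h^* - u)$. For the last piece the same $L^2$-orthogonality as before is decisive: because $u_h^* - u_h = u_h^* - \Pi^k u_h^*$ is orthogonal to $\P^k$ on every element and $u_h \in \P^k$, we obtain $(u_h, u_h^*) = \|u_h\|_0^2 = 1$, whence $(u_h, u_h^* - u) = 1 - (u, u_h) = \tfrac{1}{2}\|u - u_h\|_0^2$ by the standard polarization (using $\|u\|_0 = \|u_h\|_0 = 1$). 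Substituting the Prager-Synge identity \eqref{eq::lamminlamh} for $\lambda - \lambda_h$, the $h^{2r}$-sized term $-\lambda_h\|u - u_h\|_0^2$ cancels against the $+\lambda\|u - u_h\|_0^2$ produced by the $(u_h, u_h^* - u)$ expansion, leaving only the higher-order remainder $(\lambda - \lambda_h)\|u - u_h\|_0^2$. What remains is
\[
(\lambda - \lambda_h^*)\|u_h^*\|_0^2 \;=\; \|\sigma - \sigma_h\|_0^2 + (\lambda - \lambda_h)\|u - u_h\|_0^2 + 2\lambda(u - u_h, u_h^* - u) + \lambda\|u_h^* - u\|_0^2.
\]

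Finally I would bound each piece using \eqref{eq::approx_primal}, \eqref{eq::approx_primal_one}, \eqref{eq::approxpost} and Cauchy-Schwarz on the cross term; the four summands are controlled by $h^{2r'}$, $h^{4r}$, $\rho(h) h^{r+r'}$ and $\rho(h)^2 h^{2r'}$ times the appropriate Sobolev norms. Since $r \ge 1$ in the regime of interest we have $r' \le r+1 \le 2r$ and hence $h^{4r} \le h^{2r'}$, so the decisive contributions are $h^{2r'}$ and $\rho(h) h^{r+r'}$. Dividing by $\|u_h^*\|_0^2$, bounded away from zero by hypothesis, delivers the claim. The main obstacle is the bookkeeping in the middle step: recognizing that the $-\lambda_h\|u - u_h\|_0^2$ from Prager-Synge and the $+\lambda\|u - u_h\|_0^2$ from the $(u_h, u_h^* - u)$ expansion must combine to the higher-order $(\lambda - \lambda_h)\|u - u_h\|_0^2$. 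Once the commuting property $(\div \sigma_h, u_h^*) = (\div \sigma_h, u_h)$ is exploited and the polarization identities are organized correctly, the rest is routine insertion of the established a priori bounds.
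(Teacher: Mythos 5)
Your proof is correct, and it takes a genuinely different route from the paper's. The paper derives the identity $\lambda-\lambda_h^*=\|\sigma-\sigma_h\|_0^2-\lambda_h^*\|u-u_h^*\|_0^2-2(\div\sigma_h+\lambda_h^*u_h^*,u)$ and then attacks the last term via the orthogonality $(\div\sigma_h+\lambda_h^*u_h^*,u_h^*)=0$, a splitting that brings in $\|\div(\sigma-\sigma_h)\|_0$, and a final absorption of an $|\lambda-\lambda_h^*|^2$ term for $h$ small. You instead exploit the exact algebraic relation $(\div\sigma_h,u_h^*)=(\div\sigma_h,u_h)$ (from $\div\Sigma_h\subseteq U_h$ and $\Pi^ku_h^*=u_h$) to collapse the numerator of $\lambda_h^*$ to $\pm\lambda_h$, reducing the whole problem to the known identity \eqref{eq::lamminlamh} for $\lambda-\lambda_h$ plus control of $\|u_h^*\|_0^2-1$; the decisive step, which you correctly identify and execute, is the cancellation of the two $\mathcal{O}(h^{2r})$ terms $\mp\lambda_h\|u-u_h\|_0^2$ and $\pm\lambda\|u-u_h\|_0^2$ into the harmless $(\lambda-\lambda_h)\|u-u_h\|_0^2$. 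Your resulting cross term $\|u-u_h\|_0\|u-u_h^*\|_0\lesssim\rho(h)h^{r+r'}$ plays the role of the paper's $\|\div(\sigma-\sigma_h)\|_0\|u-u_h^*\|_0$ and gives the same rate, and your route avoids both the divergence estimate and the absorption argument. Three small caveats: (i) with the paper's literal definition of $\lambda_h^*$ one gets $\lambda_h^*\|u_h^*\|_0^2=-\lambda_h$, not $+\lambda_h$ — this is a sign inconsistency already present in the paper's own proof, and your convention is clearly the intended one; (ii) the hypothesis only states $\|u_h^*\|_0\neq 0$, so "bounded away from zero" really needs $h$ small enough (it follows from $\|u-u_h^*\|_0\to 0$ and $\|u\|_0=1$), which puts you on the same footing as the paper's absorption step; (iii) your justification of $h^{4r}\le h^{2r'}$ via "$r\ge 1$" does not cover the low-regularity case $s<1$, but there $r=r'=s$ and $4r=4s>2s=2r'$, so the conclusion still holds. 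Note also that the paper includes its proof specifically to extract the intermediate identity \eqref{eq::midstep}, which is reused in the proof of Theorem~\ref{lem::lambdaapost}; your argument does not produce that identity, so it could not simply replace the paper's proof without reworking that later result.
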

\begin{proof}
   Since $\|u\|_0 = 1$ we have using that
  $\div \Sigma_h \subseteq U_h$ and \eqref{eq::post_two}
  \begin{align*}
    (\sigma,\sigma) &= -(\div \sigma,u) = \lambda (u,u) = \lambda, \\
    (\sigma_h,\sigma_h) &= (\div \sigma_h,u_h) = (\Pi^k \div \sigma_h,u_h) = (\div\sigma_h, u^*_h) = \lambda_h^* (u_h^*,u_h^*),\\
    \| \sigma - \sigma_h \|^2_0 &= (\sigma - \sigma_h ,\sigma - \sigma_h )
    = (\sigma,\sigma) + (\sigma_h,\sigma_h) - 2(\sigma,\sigma_h) \\
    &= \lambda + \lambda_h^*  (u_h^*,u_h^*) + 2 (\div \sigma_h, u).
  \end{align*}
  Using $\lambda_h^*\| u - u_h^* \|^2_0 = \lambda_h^*(u,u) +
  \lambda_h^*(u_h^*,u_h^*) - 2 \lambda_h^*(u,u_h^*)$ we have in total
  \begin{align*}
    \lambda - &\lambda_h^* \\
    &= \| \sigma - \sigma_h \|^2_0 - \lambda_h^*  (u_h^*,u_h^*) - 2 (\div \sigma_h, u) - \lambda_h^*,\\
    &=\| \sigma - \sigma_h \|^2_0 + \lambda_h^*  (u,u) - 2\lambda_h^* (u,u_h^*) - \lambda_h^* \| u - u_h^* \|^2_0 - 2 (\div \sigma_h, u) - \lambda_h^*, 
  \end{align*}
  and thus again with $\| u \|_0 = 1$ this gives
  \begin{align} \label{eq::midstep}
    \lambda - \lambda_h^* &= \| \sigma - \sigma_h \|^2_0 - \lambda_h^* \| u - u_h^* \|^2_0 - 2 (\div \sigma_h + \lambda_h^* u_h^*, u).
  \end{align}
  Since $(\div\sigma_h + \lambda_h^* u_h^*, u_h^*) = 0$ (according
  to the definition of $\lambda_h^*$), the last term can be written as
  \begin{align*}
    (\div\sigma_h &+ \lambda_h^* u_h^*, u) \\
    &= (\div \sigma_h + \lambda_h^* u_h^*, u - u_h^*), \\ 
    &= (\div(\sigma_h - \sigma), u - u_h^*)  + (\div\sigma + \lambda_h^* u_h^*, u - u_h^*),\\ 
    &= (\div(\sigma_h - \sigma), u - u_h^*)  + (-\lambda u + \lambda_h^* u_h^*, u - u_h^*)\\ 
    &= (\div(\sigma_h - \sigma), u - u_h^*)  + \lambda_h^* (u_h^* - u, u - u_h^*) - (\lambda - \lambda_h^*) (u, u - u_h^*).
  \end{align*}
  By the Cauchy-Schwarz inequality we finally get 
  \begin{align*}
    | \lambda - \lambda_h^* | \le& \| \sigma - \sigma_h \|^2_0 + \lambda_h^* \| u - u_h^* \|^2_0 \\
    &+ 2 \| \div(\sigma_h - \sigma) \|_0 \|u - u_h^*\|_0 + |\lambda - \lambda_h^*| \|u - u_h^*\|_0 \\
    \lesssim& \| \sigma - \sigma_h \|^2_0 + \| u - u_h^* \|^2_0 + \| \div(\sigma_h - \sigma) \|_0\| u - u_h^*\|_0  + | \lambda - \lambda_h^* |^2.
  \end{align*}
Thus, for $h$ small enough, the last term can be moved to the left
hand side, and we can conclude the proof using \eqref{eq::approxpost}
and \eqref{eq::approx_primal}.
\end{proof}

\section{A posteriori analysis} \label{sec::aposteriori}

In this section we provide an a posteriori error analysis and define
an appropriate error estimator. 
We follow \cite{MR4080229} where the authors derived an error
estimator using the variables $\sigma_h$ and $u_h^{**}$. While this
works for a mixed approximation of \eqref{eq::approx_primal} using the
Raviart-Thomas Finite element of order $k$ (as was done in
\cite{MR4080229}), this does not work for our
setting. To discuss the problematic terms and to motivate our
modification, we present more details in the following. Since
$\sigma = \nabla u$ we have 
\begin{align*}
  \| \sigma_h - \nabla u_h^{**}\|_0^2 &= \| \sigma_h - \sigma + \sigma- \nabla u_h^{**}\|_0^2 \\
  &= \| \sigma_h - \sigma \|_0^2 + \| \nabla (u - u_h^{**}) \|_0^2 + 2(\sigma_h - \sigma, \nabla(u - u_h^{**}) ).
\end{align*}
Using integration by parts, $u_h^{**} \in H^1_0(\Omega)$ and
$-\div \sigma_h = \lambda_h u_h$, the last term can be written as 
\begin{align*}
 (\sigma_h - \sigma, \nabla(u - u_h^{**}) ) &= -(\div(\sigma_h - \sigma), u - u_h^{**} )  \\
 &= -(\lambda_h u_h - \lambda u, u - u_h^{**} ) \\
 &= -(\lambda_h u_h + \lambda u_h - \lambda u_h - \lambda u, u - u_h^{**} ) \\
 &= -(\lambda_h - \lambda) ( u_h, u - u_h^{**} )  -  \lambda ( u_h - u, u - u_h^{**}).
\end{align*}
In total this gives the guaranteed upper bound
\begin{align*}
  \| \sigma_h &- \sigma \|_0^2 + \| \nabla (u - u_h^{**}) \|_0^2 \le \\
  &\| \sigma_h - \nabla u_h^{**}\|_0^2  
  + 2| \lambda_h - \lambda| \| u - u_h^{**}\|_0 + 2 \lambda \|u_h - u\|_0 \|u - u_h^{**}\|_0.
\end{align*}
In \cite{MR4080229} the first term on the right hand side is the
(computable) proposed error estimator, whereas the second and third
are high-order terms. Compared to our setting we can see the problem
since 
\begin{align*}
  \| \sigma_h - \nabla u_h^{**}\|_0^2 & \lesssim h^{2k+4}, \\
  | \lambda_h - \lambda|  \| u - u_h^{**}\|_0 &\lesssim h^{3k+4}, \\
  \|u_h - u\|_0 \|u - u_h^{**}\|_0 & \lesssim h^{2k+4},
\end{align*} 
where for simplicity, i.e. to allow a simpler comparison, we assumed a
smooth solution. Whereas the second term converges with an increased
rate (compared to $2k+4$), the bad convergence order of $\| u -
u_h\|_0$, see equation \eqref{eq::approx_primal_one}, spoils the
estimate of the last term. As we can see in the proof above, the
problem can be traced back to
the identity $-\div \sigma_h = \lambda_h u_h$, since this is the point in
the proof where $u_h$ appears first. 

To fix this problem we propose another post-processing. Whereas the
first two post-processing routines where used to increase the
convergence rate of the error of the eigenfunction and eigenvalue i.e.
$u_h^{*}$ (and $u_h^{**}$) and $\lambda_h^*$, respectively, we now aim
to construct a $\sigma_h^*$ with a fixed divergence constraint rather
than improving its approximation properties measured in the
$L^2$-norm.  To this end we define
the space
\begin{align*}
  \Sigma_h^* := \{ \tau_h \in H(\div, \Omega): \tau_h|_K \in \P^{k+3}&(K, \rr^d) ~\forall K \in \mesh, \\
   &\tau_h\cdot n|_F \in \P^{k+1}(F)~\forall F \in \facets \}.
\end{align*}
The space $\Sigma_h^*$ reads as a $\BDM$ space of order $k+3$ with a
reduced polynomial order of the normal traces. Note that other
choices of $\Sigma_h^*$ are possible, see Remark~\ref{rem::Sigmaplus}.
The basic idea now is to find a $\sigma_h^* \in \Sigma_h^*$ being as
"close" as possible to $\sigma_h$ (i.e. being a good approximation)
such that the divergence is modified appropriately using the
additional high-order normal-bubbles (i.e. functions with a zero
normal component). Since these bubbles are defined locally, this can
be done in an element-wise procedure.  
Now let $\xi_h \in \Sigma_h^*$ be arbitrary. Proposition 2.3.1 in
\cite{MR3097958} shows that the following degrees of freedom (here
applied to $\xi_h$)
\begin{subequations} \label{eq::bdmmoments}
  \begin{alignat}{2}
    \textrm{facet moments:}& ~~  \int_F \xi_h \cdot n r_h \ds && \quad \forall r_h \in \P^{k+1}(F) ~ \forall F \in \facets, \label{eq::bdmmoments_edge}\\
    \textrm{div moments:}& ~~ \int_K \div\xi_h q_h \dx & &\quad \forall q_h \in \P^{k+2}(K) / \P^{0}(K) ~\forall K \in \mesh,\label{eq::bdmmoments_div}\\
    \textrm{vol moments:}& ~~ \int_K \xi_h  \cdot l_h \dx && \quad \forall l_h \in \mathbb{H}^{k+3}(K) ~\forall K \in \mesh, \label{eq::bdmmoments_vol}
  \end{alignat}
\end{subequations}
where $\mathbb{H}^{k+3}(K) := \{l_h \in  \P^{k+3}(K, \rr^d): \div l_h
= 0, l_h\cdot n|_{\partial K} = 0\}$, are unisolvent.  
By that we can define the post processed flux $\sigma^*_h \in
\Sigma_h^*$ by 
\begin{subequations} \label{eq::bdmmoments_def}
   \begin{alignat}{2}
    & \int_F (\sigma^*_h - \sigma_h) \cdot n r_h \ds && \quad \forall r_h \in \P^{k+1}(F)~ \forall F \in \facets, \label{eq::bdmmoments_edge_def}\\
    & \int_K (\div\sigma^*_h + \lambda_h u_h^*) q_h \dx & &\quad \forall q_h \in \P^{k+2}(K) / \P^{0}(K) ~\forall K \in \mesh,\label{eq::bdmmoments_div_def}\\
    & \int_K (\sigma^*_h - \sigma_h)  \cdot l_h \dx && \quad \forall l_h \in \mathbb{H}^{k+3}(K) ~ \forall K \in \mesh. \label{eq::bdmmoments_vol_def}
  \end{alignat}
    \end{subequations}  
Note that since $\sigma_h$ is normal continuous, i.e. the normal trace
coincides on a common facet of two neighboring elements, the boundary
constraints \eqref{eq::bdmmoments_edge_def} of $\sigma_h^*$ can be set
locally on each element (boundary) separately. Further, since
$\sigma_h \cdot n$ and $\sigma_h^* \cdot n $ have the same polynomial
degree $k+1$, the moments from \eqref{eq::bdmmoments_edge_def} result
in $\sigma_h \cdot n = \sigma^*_h \cdot n$. In
Remark~\ref{rem::sigmapostdiv} we also make a comment on the choice of
\eqref{eq::bdmmoments_div_def}. 

\begin{theorem} \label{th::sigmapostapriori}
  Let $\sigma_h^* \in \Sigma_h^*$ be the function defined by
  \eqref{eq::bdmmoments_def}, then there holds
  \begin{align*}
    -\div \sigma_h^* = \lambda_h u_h^*.
  \end{align*}
    Let $s > 1/2$ be the solution of the eigenvalue problem
  \eqref{eq::mixedeq} with the regularity $\sigma \in H(\div, \Omega)
  \cap H^s(\Omega, \rr^d)$. There holds the a priori error estimate
  \begin{align*}
    \| \sigma - \sigma_h^* \|_0 \lesssim h^{r'} \meannorm{u},
  \end{align*}
  where  $r' = \min\{s,k+2\}$ and  $r = \min\{s,k+1\}$.
\end{theorem}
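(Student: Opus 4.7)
The plan is to establish the two claims separately: first the divergence identity by a pure moment-matching argument, then the $L^2$ estimate via a triangle inequality that isolates the element-local correction $\sigma_h^*-\sigma_h$.

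For the divergence identity, note that on each element $K$ both $\div\sigma_h^*|_K$ and $\lambda_h u_h^*|_K$ lie in $\P^{k+2}(K)$, hence so does $\div\sigma_h^* + \lambda_h u_h^*$. The moments \eqref{eq::bdmmoments_div_def} force this function to be $L^2(K)$-orthogonal to $\P^{k+2}(K)/\P^0(K)$, so it must be a constant on $K$. To pin down the constant, I would compute its mean via the divergence theorem: $\int_K \div\sigma_h^*\dx = \int_{\partial K}\sigma_h^*\cdot n\ds = \int_{\partial K}\sigma_h\cdot n\ds = \int_K\div\sigma_h\dx$, where the middle equality uses \eqref{eq::bdmmoments_edge_def} together with the fact that both normal traces already lie in $\P^{k+1}(F)$ and therefore agree pointwise. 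Testing \eqref{eq::mixedeqapprox_two} against $\chi_K\in U_h$ gives $\int_K\div\sigma_h\dx = -\lambda_h\int_K u_h\dx$, and applying $\Pi^k u_h^* = u_h$ to the constant $1\in\P^k$ yields $\int_K u_h\dx = \int_K u_h^*\dx$. Chaining these shows the mean of $\div\sigma_h^* + \lambda_h u_h^*$ on $K$ vanishes, so the constant itself is zero.

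For the a priori bound, I would split $\|\sigma-\sigma_h^*\|_0 \le \|\sigma-\sigma_h\|_0 + \|\sigma_h - \sigma_h^*\|_0$ and control the first term by \eqref{eq::approx_primal_three}. For the correction term, observe that $\sigma_h-\sigma_h^*$ is determined element-by-element: by \eqref{eq::bdmmoments_edge_def} its normal trace vanishes on every facet, by \eqref{eq::bdmmoments_vol_def} its moments against $\mathbb{H}^{k+3}(K)$ vanish, and its divergence equals $\lambda_h(u_h^*-u_h)$ on each $K$ (using the first part together with $\div\sigma_h = -\lambda_h u_h$, which follows from the kernel inclusion $\div\Sigma_h\subseteq U_h$ and \eqref{eq::mixedeqapprox_two}). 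Unisolvence of the degrees of freedom \eqref{eq::bdmmoments} together with a Piola-transform scaling argument on the reference element then give the local estimate $\|\sigma_h-\sigma_h^*\|_{0,K} \lesssim h_K\|\div(\sigma_h-\sigma_h^*)\|_{0,K} = h_K\lambda_h\|u_h^*-u_h\|_{0,K}$, since only the divergence DOFs are nonzero.

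Summing over elements and using $\|u_h^*-u_h\|_0 \le \|u-u_h\|_0 + \|u-u_h^*\|_0 \lesssim h^r\meannorm{u}$ from \eqref{eq::approx_primal_two} and \eqref{eq::approxpost} yields $\|\sigma_h-\sigma_h^*\|_0 \lesssim h^{r+1}\meannorm{u}$. A short case distinction on the three ranges $s\le k+1$, $s\in(k+1,k+2]$, and $s>k+2$ shows that $r+1\ge r'$ always, so $h^{r+1}\le h^{r'}$ and the claim follows. The main obstacle I expect is precisely the local scaling bound $\|\sigma_h-\sigma_h^*\|_{0,K} \lesssim h_K\|\div(\sigma_h-\sigma_h^*)\|_{0,K}$: while morally it is a consequence of unisolvence plus standard Piola scaling, one must carefully verify that the vanishing facet and $\mathbb{H}^{k+3}$-moments genuinely reduce the DOF norm to the divergence contribution alone, and that the scaling contributes exactly one power of $h_K$, which is what closes the gap between $r$ and $r'$ in the final estimate.
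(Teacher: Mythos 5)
Your proof of the divergence identity is essentially the paper's own argument: orthogonality against $\P^{k+2}(K)/\P^{0}(K)$ reduces everything to the mean value, which is pinned down by the Gauss theorem, the matching normal traces from \eqref{eq::bdmmoments_edge_def}, equation \eqref{eq::mixedeqapprox_two} tested with the indicator of $K$, and $\Pi^k u_h^* = u_h$. That part is fine.

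For the a priori estimate you take a genuinely different route. The paper inserts the canonical interpolant $\Isigstar\sigma$, splits $\Isigstar(\sigma-\sigma_h^*)$ into a low-order part (handled via $\Isig\sigma_h^*=\sigma_h$) and a high-order part $(\Isigstar-\Isig)(\sigma-\sigma_h^*)$, and bounds the latter through dual basis functions by $\|\lambda u-\lambda_h u_h^*\|_0+\|\sigma-\sigma_h\|_0$; the needed rate then comes from comparing with the \emph{continuous} divergence $-\div\sigma=\lambda u$ and using $h^{2r}\le h^{r'}$, with no extra power of $h$ extracted from the bubbles. You instead bound the purely local correction $\sigma_h^*-\sigma_h$ directly: its facet and $\mathbb{H}^{k+3}$-moments vanish by construction, its divergence is exactly $\lambda_h(u_h^*-u_h)$, and a reference-element norm equivalence plus Piola scaling gives $\|\sigma_h^*-\sigma_h\|_{0,K}\lesssim h_K\|\div(\sigma_h^*-\sigma_h)\|_{0,K}$. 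Your route is arguably more transparent (it never touches $\Isigstar$ and works only with computable discrete quantities), but it \emph{must} win the factor $h_K$, since $\|u_h^*-u_h\|_0$ is only $O(h^{r})$ and $h^{r}\not\le h^{r'}$; you correctly identify this as the crux. Two details deserve care there: (i) the norm equivalence $\|\hat\tau\|_{0,\hat K}\lesssim\|\widehat{\div}\hat\tau\|_{0,\hat K}$ holds on the subspace with vanishing facet and $\mathbb{H}$-moments because a divergence-free member of that subspace has all DOFs equal to zero, hence vanishes by unisolvence; (ii) the Piola map does not carry the physical $\mathbb{H}^{k+3}(K)$-orthogonality exactly to reference-element orthogonality (the volume moments transform with a factor $B^TB$), so the equivalence constant must be taken uniform over the compact family of shape-regular Jacobians. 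Both are standard, and the paper's own treatment of the dual basis functions is no more explicit on this point, so I consider your argument complete and correct; the final bookkeeping $r+1\ge r'$ in all three regularity regimes is also right.
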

\begin{proof}
  We start with the proof of the divergence identity. Let $K \in
  \mesh$ and $q_h \in \P^{k+2}(K)$ be arbitrary, then we have
\begin{align*}
 -\int_K \div \sigma_h^* q_h \dx &= -\int_K \div \sigma_h^* (q_h - \Pi^0q_h) \dx  - \int_K \div \sigma_h^*  \Pi^0p_h \dx =\\
  & = \int_K \lambda_h u_h^* (q_h - \Pi^0q_h) \dx - \Pi^0q_h \int_{\partial K} \sigma_h^* \cdot n \ds,
\end{align*}
where the second step followed due to \eqref{eq::bdmmoments_div_def} and
the Gauss theorem. Using \eqref{eq::bdmmoments_edge_def} and
\eqref{eq::mixedeqapprox_two}, the last integral can be written as 
\begin{subequations}\label{eq::lambdachoice}
\begin{align}
  -\Pi^0q_h  \int_{\partial K} \sigma_h^* \cdot n \ds &= -\Pi^0q_h \int_{\partial K} \sigma_h \cdot n \ds 
  = -\int_K \Pi^0q_h \div \sigma_h  \dx \\ 
  &= \int_K \Pi^0q_h \lambda_h u_h \dx =\int_K \Pi^0q_h \lambda_h u^*_h \dx, 
\end{align}
\end{subequations}
where we used \eqref{eq::post_two} in the last step. All together this
gives 
\begin{align*}
  -\int_K \div \sigma_h^* q_h \dx = \int_K \lambda_h u^*_h q_h \dx,
\end{align*}
from which we conclude the proof as $\div \sigma_h^* - \lambda_h
u^*_h \in \P^{k+2}(K)$ and $q_h$ was arbitrary. 


Now let $\Isigstar$ be the canonical interpolation operator into
$\Sigma_h^*$ with respect to the moments \eqref{eq::bdmmoments}, and
let $\Isig$ be the interpolation operator into $\Sigma_h$ which is
defined using the same moments \eqref{eq::bdmmoments} but with $q_h
\in \P^{k}(K) / \P^{0}(K)$ and $l_h \in \mathbb{H}^{k+1}(K)$ instead.
First, the triangle inequality gives $ \| \sigma - \sigma_h^* \|_0 \le
\| \sigma - \Isigstar \sigma\|_0  + \| \Isigstar \sigma - \sigma_h^*
\|_0$. Since the first term can be bounded by the properties of
$\Isigstar$, we continue with the latter which can be written as 
\begin{align*}
  \| \Isigstar \sigma - \sigma_h^*\|_0 = \|\Isigstar (\sigma - \sigma_h^*)\|_0 \le \| (\Isigstar - \Isig) (\sigma - \sigma_h^*) \|_0 +  \|\Isig(\sigma - \sigma_h^*)\|_0.
\end{align*}
By the definition of the interpolation operators and similar steps as
above we have $\Isig(\sigma_h^*) = \sigma_h$, and thus the term most
to the right simplifies to 
\begin{align*}
  \|\Isig(\sigma - \sigma_h^*)\|_0 = \|\Isig\sigma - \sigma_h\|_0 \le \|\Isig\sigma - \sigma\|_0 + \|\sigma - \sigma_h\|_0.
\end{align*}
We continue with the other term. For this let $\psi^{\div}_i$ be the
hierarchical dual basis functions of the highest order divergence
moments from \eqref{eq::bdmmoments_div} given by $\int_K \div(\cdot)
q_i \dx$ with $q_i \in \P^{k+2}(K) / \P^{k}(K)$. Similarly let
$\psi_i^{\mathbb{H}}$ be the hierarchical dual basis functions of the
highest order vol moments from \eqref{eq::bdmmoments_vol} given by
$\int_K (\cdot) \cdot l_i \dx$ with $l_i \in \mathbb{H}^{k+3}(K) /
\mathbb{H}^{k+1}(K)$. An explicit construction of these basis
functions can be found for example in \cite{Beuchler2012,zaglmayr2006high}. Also let
$N_{\div}$ and $N_\mathbb{H}$ be the corresponding index sets. Using
\eqref{eq::mixedeq_two}, \eqref{eq::bdmmoments_div_def} and
\eqref{eq::bdmmoments_vol_def}, this then gives
\begin{align*}
  (\Isigstar - \Isig) (\sigma &- \sigma_h^*) \\
  &= \sum_{i \in N_{\div}} \int_K \div(\sigma - \sigma_h^*) q_i \dx \psi^{\div}_i + \sum_{i \in N_{\mathbb{H}}} \int_K (\sigma - \sigma_h^*) l_i \dx \psi^\mathbb{H}_i\\
   &= - \sum_{i \in N_{\div}} \int_K (\lambda u - \lambda_h u_h^*) q_i \dx \psi^{\div}_i + \sum_{i \in N_{\mathbb{H}}} \int_K (\sigma - \sigma_h) l_i \dx \psi^\mathbb{H}_i,
\end{align*}
which implies that (using that the norms of the $q_i, l_i$ and
$\psi^{\div}_i,\psi^{\mathbb{H}}_i$ is bounded)
\begin{align*}
  \|(\Isigstar - \Isig) (\sigma - \sigma_h^*)\|_0 &\lesssim \| \lambda u - \lambda_h u_h^* \|_0 + \| \sigma - \sigma_h \|_0 \\
 & \lesssim |\lambda | \| u - u_h^*\|_0 + | \lambda - \lambda_h | \| u_h^*\|_0 + \| \sigma - \sigma_h \|_0.
\end{align*}
Since $\| u_h^*\|_0 \le \| u_h^* - u\|_0 + \| u \|_0$, we can conclude
the proof by the approximation properties of $\Isig$ and $\Isigstar$  
(see Proposition 2.5.1 in \cite{MR3097958}), estimates
\eqref{eq::approx_primal_one} and \eqref{eq::approxpost} and by $\rho(h)
h^{r'} \le h^{r'}$ and $h^{2r} \le h^{r'}$. 
\end{proof}


\begin{remark} \label{rem::Sigmaplus} Instead of choosing $\Sigma_h^*$
  as above, one can for example also use the standard Raviart-Thomas
  space of order $k+2$ denoted by $RT^{k+2}$. Since $\div RT^{k+2} =
  U_h^*$ it is again possible to set $-\div \sigma_h^* = \lambda_h
  u_h^*$ (using the appropriate degrees of freedom). However, since
  the normal trace of $\sigma^*_h$ is now in $\P^{k+2}(F)$ on each facet
  $F \in \facets$, one has to be more careful defining the edge
  moments. Precisely, we would now set 
  \begin{align*}
    \Pi^{k+1} (\sigma^*_h \cdot n) = \sigma_h \cdot n, \quad \textrm{and} \quad (\id - \Pi^{k+1})( \sigma^*_h \cdot n) = 0.
  \end{align*}  
  where the projection has to be understood as the $L^2$-projection on
  the facets. 
\end{remark}

\begin{remark}\label{rem::sigmapostdiv} One might be curious why we do
  not use $\lambda_h^*$ instead of $\lambda_h$ in the definition of
  $\div \sigma^*_h$ in \eqref{eq::bdmmoments_div_def}. Indeed, as can be
  seen in the proof this is a crucial choice since we used in
  \eqref{eq::lambdachoice} that the mean value of the divergence is
  fixed by the constant normal moments (first equal sign) and thus
  coincides with $\Pi^0 (\lambda_h u_h)$ (third equal sign). Choosing
  $\lambda_h^*$ in \eqref{eq::bdmmoments_edge_def} would then lead to a
  mismatch of the low-order and high-order parts of the divergence.
\end{remark}

We are now in the position of defining the local error estimator on
each element $K \in \mesh$ by
\begin{align*}
  \eta(K) := \| \nabla u_h^{**} - \sigma_h^* ||_K,
\end{align*}
and the corresponding global estimator by 
\begin{align*}
  \eta := \Big(\sum_{K \in \mesh} \eta(K)^2\Big)^{1/2} = \| \nabla u_h^{**} - \sigma_h^* ||_0.
\end{align*}
\begin{theorem} \label{th::aposteriori} Let $(\lambda,u,\sigma)$ be
the solution of \eqref{eq::mixedeq}. Let $(\lambda_h,u_h,\sigma_h)$
the the solution of \eqref{eq::approx_primal} and let $u_h^{**}$ and
$\sigma_h^*$ be the post-processed solutions. There holds the
reliability estimate 
\begin{align*}
  \| \nabla u - \nabla u_h^{**} \|_0^2 + \| \sigma - \sigma^*_h\|^2_0 \le \eta^2 + \hot(h) 
\end{align*}
where $\hot(h) := 2 |(\sigma_h^* - \sigma, \nabla(u-u_h^{**}))|$ 
with 
\begin{align*}
  \hot(h) \lesssim \rho(h) (h^{2r + r'} + \rho(h)h^{2r'}) \meannorm{u},
\end{align*} is a
 high-order term compared to $\mathcal{O}(h^{2r'})$ as $h
 \rightarrow 0$. Further, there holds the efficiency 
 \begin{align*}
   \eta \le \| \nabla u - \nabla u_h^{**} \|_0 + \| \sigma - \sigma^*_h\|_0.
 \end{align*}
\end{theorem}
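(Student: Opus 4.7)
The plan is to split the theorem into its three assertions and handle each by short algebraic manipulations that leverage the modified divergence identity from Theorem~\ref{th::sigmapostapriori}. For reliability, I first add and subtract $\sigma = \nabla u$ inside $\eta^2 = \|\sigma_h^* - \nabla u_h^{**}\|_0^2$ and expand the square to obtain
\begin{align*}
\eta^2 = \|\sigma - \sigma_h^*\|_0^2 + \|\nabla(u - u_h^{**})\|_0^2 + 2(\sigma_h^* - \sigma, \nabla(u - u_h^{**})).
\end{align*}
Rearranging and bounding the cross term in absolute value by $\hot(h)$ yields the reliability bound directly.

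For the high-order bound on $\hot(h)$, I would use that $u - u_h^{**} \in H^1_0(\Omega)$ and integrate by parts to get $(\sigma_h^* - \sigma, \nabla(u - u_h^{**})) = -(\div(\sigma_h^* - \sigma), u - u_h^{**})$. This is the place where the construction matters: Theorem~\ref{th::sigmapostapriori} gives $-\div \sigma_h^* = \lambda_h u_h^*$, while the continuous problem gives $-\div \sigma = \lambda u$, so $\div(\sigma_h^* - \sigma) = \lambda u - \lambda_h u_h^*$. I split
\begin{align*}
\lambda u - \lambda_h u_h^* = \lambda (u - u_h^*) + (\lambda - \lambda_h)\, u_h^*,
\end{align*}
apply Cauchy--Schwarz, and bound the two pieces using \eqref{eq::approx_primal_one} and \eqref{eq::approxpostpost}, together with the uniform estimate $\|u_h^*\|_0 \le 1 + \|u - u_h^*\|_0 \lesssim 1$. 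The first piece contributes $\rho(h)^2 h^{2r'}\meannorm{u}^2$ and the second $\rho(h) h^{2r + r'}\meannorm{u}^2$, giving exactly the claimed $\hot(h)$ bound. Since $\rho(h) \to 0$ as $h \to 0$, both terms are $o(h^{2r'})$.

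For efficiency, a triangle inequality using $\sigma = \nabla u$ gives at once
\begin{align*}
\eta = \|\nabla u_h^{**} - \sigma_h^*\|_0 \le \|\nabla(u - u_h^{**})\|_0 + \|\sigma - \sigma_h^*\|_0.
\end{align*}
The main obstacle is not really technical and has already been absorbed into Theorem~\ref{th::sigmapostapriori}: it is the replacement of the discrete identity $-\div \sigma_h = \lambda_h u_h$, whose use caused the breakdown in the BDM analysis discussed just before the theorem, by the post-processed identity $-\div \sigma_h^* = \lambda_h u_h^*$. This substitution is what lets the cross term be controlled using the fast rate $\rho(h) h^{r'}$ for $\|u - u_h^*\|_0$ from \eqref{eq::approxpost}, rather than the slower $h^r$ rate for $\|u - u_h\|_0$ from \eqref{eq::approx_primal_two}, and is precisely what makes $\hot(h)$ asymptotically negligible compared to the left-hand side of the reliability estimate.
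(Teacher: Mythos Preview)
Your proof is correct and mirrors the paper's own argument step for step: the same expansion of $\eta^2$ via $\sigma = \nabla u$, the same integration by parts using the key identity $-\div\sigma_h^* = \lambda_h u_h^*$ from Theorem~\ref{th::sigmapostapriori}, the same splitting $\lambda u - \lambda_h u_h^* = \lambda(u-u_h^*) + (\lambda-\lambda_h)u_h^*$, and the same triangle inequality for efficiency. The one point the paper spells out that you gloss over is the inequality $2r' \le 2r + r'$ (checked there case by case for the three regularity regimes), which is what guarantees that $\rho(h)h^{2r+r'}$ is indeed $o(h^{2r'})$ rather than just $\rho(h)$ times a possibly slower power of $h$.
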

\begin{proof}
  Following the same steps as at the beginning of this section we arrive at 
  \begin{align*}
    \| \nabla u - \nabla u_h^{**} \|_0^2 + \| \sigma - \sigma^*_h\|^2_0 = \| \nabla u_h^{**} - \sigma_h^* ||^2_0 + 2 (\sigma_h^* - \sigma, \nabla(u-u_h^{**})).
  \end{align*}
For the last term we now have
\begin{align*}
  (\sigma^*_h - \sigma, \nabla(u - u_h^{**}) ) &= -(\div(\sigma^*_h - \sigma), u - u_h^{**} )  \\
  &= -(\lambda_h u^*_h - \lambda u, u - u_h^{**} ) \\
  &= -( \lambda_h - \lambda) ( u^*_h, u - u_h^{**} )  -  \lambda ( u^*_h - u, u - u_h^{**}).
 \end{align*}
 Whereas the first term converges of order
 \begin{align*}
  | \lambda_h - \lambda| |( u^*_h, u - u_h^{**} )| &\le | \lambda_h - \lambda| \| u_h^*\|_0 \|u - u_h^{**}\|_0 \\
  &\lesssim \rho(h)h^{2r+r'} \meannorm{u},
 \end{align*}
 we have for the second term
 \begin{align*}
  |\lambda| |( u^*_h - u, u - u_h^{**})| &\le |\lambda| \| u_h^* - u \|_0 \| u - u_h^{**} \|_0\\
  &\lesssim \rho(h)^2 h^{2r'}\meannorm{u}.
 \end{align*}
 It remains to show that $\hot(h) \lesssim \rho(h) (h^{2r + r'} +
 \rho(h)h^{2r'})$ is of higher order compared to $h^{2r'}$. Due to the
 additional $\rho(h)$ in the upper bound of $\hot(h)$, we only have to
 show that $2r'  \le 2r + r'$. For the low regularity case, i.e. $s =
 r = r'$, and the case of full regularity, i.e. $r = k+1$ and $r' = k
 + 2$, this follows immediately. For the case where $r = k+1$ and $r'
 = s$ with $k+1 < s < k+2$, we also have 
 \begin{align*}
   2 r' = 2 s < k + 2 + s < 2(k+1) + s = 2r + r',
 \end{align*}
 from which we conclude the proof of the reliability.
 
 The efficiency estimate follows by the triangle inequality and $\sigma = \nabla u$. 
\end{proof}


Using the estimator from above we are now also able to derive an upper
bound for $\lambda_h^*$. To this end let 
\begin{align*}
  \eta_\lambda := \eta^2 + \| \sigma_h - \sigma_h^* \|^2_0 + | ( \lambda_h^* u_h^* - \lambda_h u_h, u_h^{**}) |.
\end{align*}
The last two terms from the estimator $\eta_\lambda$ are needed to
measure the difference between the quantities used in $\eta$ and the
functions used in the definition of $\lambda_h^*$. Unfortunately the
authors do not see how the definition of $\lambda_h^*$ can be changed
such that only $\sigma_h^*$ and $u_h^{**}$ are used, which would allow
a direct estimate by $\eta$.
\begin{theorem} \label{lem::lambdaapost} Let $(\lambda,u,\sigma)$ be the
  solution of \eqref{eq::mixedeq}. Let $(\lambda_h,u_h,\sigma_h)$ the
  the solution of \eqref{eq::approx_primal} and let $u_h^{**}$ and
  $\sigma_h^*$ be the post-processed solutions. There holds the
  estimate
  \begin{align*}
    | \lambda - \lambda_h^* | \lesssim  \eta_\lambda + \hot(h) + \widetilde{\hot}(h),
  \end{align*}
  where $\widetilde{\hot}(h) := \|u_h^* - u\|_0 \|u - u_h^{**}\|_0 +
  \|u - u_h^{**}\|^2_0$ 
  with 
  \begin{align*}
    \widetilde{\hot}(h)  \lesssim \rho(h)^2h^{2r'} \meannorm{u},
  \end{align*}  
  and $\hot(h)$ are higher order terms compared to $\mathcal{O}(\rho(h)h^{r + r'}
  + h^{2r'})$ as $h \rightarrow 0$.
\end{theorem}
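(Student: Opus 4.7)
The plan is to start from identity~\eqref{eq::midstep} that was established in the proof of Lemma~\ref{lem::lampostapriori}, namely
$$\lambda - \lambda_h^* = \| \sigma - \sigma_h \|^2_0 - \lambda_h^* \| u - u_h^* \|^2_0 - 2 (\div \sigma_h + \lambda_h^* u_h^*, u),$$
and to bound each of the three non-computable summands on the right-hand side by an ingredient of $\eta_\lambda$ plus remainders of higher order. The main tools will be Theorem~\ref{th::aposteriori}, applied both to $\| \sigma - \sigma_h^* \|_0^2$ and to $\| \nabla(u - u_h^{**}) \|_0^2$, together with the discrete identity $-\div \sigma_h = \lambda_h u_h$, which holds pointwise since $\div \Sigma_h \subseteq U_h$.

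For the term $\| \sigma - \sigma_h \|_0^2$, the triangle inequality combined with Theorem~\ref{th::aposteriori} gives
$$\| \sigma - \sigma_h \|_0^2 \lesssim \| \sigma - \sigma_h^* \|_0^2 + \| \sigma_h - \sigma_h^* \|_0^2 \lesssim \eta^2 + \hot(h) + \| \sigma_h - \sigma_h^* \|_0^2,$$
and all contributions on the right are contained in $\eta_\lambda + \hot(h)$. The term $|\lambda_h^*| \, \| u - u_h^* \|_0^2$ is controlled via $\| u - u_h^* \|_0 \le \| u - u_h^{**} \|_0 + \| u_h^{**} - u_h^* \|_0$, uniform boundedness of $\lambda_h^*$, and $\| u_h^{**} - u_h^* \|_0 \le \| u - u_h^{**} \|_0 + \| u - u_h^* \|_0$, and fits into $\widetilde{\hot}(h)$ up to a constant. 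For the inner product $(\div \sigma_h + \lambda_h^* u_h^*, u)$, using $-\div \sigma_h = \lambda_h u_h$ and splitting $u = u_h^{**} + (u - u_h^{**})$ yields
$$(\div \sigma_h + \lambda_h^* u_h^*, u) = (\lambda_h^* u_h^* - \lambda_h u_h, u_h^{**}) + (\lambda_h^* u_h^* - \lambda_h u_h, u - u_h^{**}),$$
the first piece being precisely the remaining contribution to $\eta_\lambda$.

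To treat the last inner product I would insert $\pm \lambda u$ and use $\lambda u - \lambda_h u_h = \div(\sigma_h - \sigma)$, so that integration by parts (exploiting $u - u_h^{**} \in H^1_0(\Omega)$) gives $(\lambda u - \lambda_h u_h, u - u_h^{**}) = (\sigma - \sigma_h, \nabla(u - u_h^{**}))$. Splitting $\sigma - \sigma_h$ through $\sigma_h^*$ produces $|(\sigma - \sigma_h^*, \nabla(u - u_h^{**}))| = \hot(h)/2$ by the very definition used in Theorem~\ref{th::aposteriori}, and a residual $(\sigma_h^* - \sigma_h, \nabla(u - u_h^{**}))$ bounded by Cauchy--Schwarz and Young through $\| \sigma_h - \sigma_h^* \|_0^2$, which is part of $\eta_\lambda$, and $\| \nabla(u - u_h^{**}) \|_0^2 \le \eta^2 + \hot(h)$ from Theorem~\ref{th::aposteriori}. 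The complementary piece $\lambda_h^* u_h^* - \lambda u = \lambda_h^*(u_h^* - u) + (\lambda_h^* - \lambda) u$ contributes, via Cauchy--Schwarz, the bound $|\lambda_h^*| \, \| u_h^* - u \|_0 \, \| u - u_h^{**} \|_0 + |\lambda - \lambda_h^*| \, \| u - u_h^{**} \|_0$. The first summand is the first piece of $\widetilde{\hot}(h)$, while the second is absorbed onto the left-hand side for $h$ sufficiently small, producing the $\| u - u_h^{**} \|_0^2$ contribution to $\widetilde{\hot}(h)$ after a Young inequality.

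The rate estimate $\widetilde{\hot}(h) \lesssim \rho(h)^2 h^{2r'} \meannorm{u}$ then follows directly from \eqref{eq::approxpost} and \eqref{eq::approxpostpost}; the rate of $\hot(h)$ has already been treated in Theorem~\ref{th::aposteriori}, and the comparison to the main order $\rho(h) h^{r + r'} + h^{2r'}$ reduces to the same regularity case analysis used there together with $\rho(h)^2 \le \rho(h)$ for $h$ small. The principal obstacle I expect is the careful bookkeeping in the absorption step: $|\lambda - \lambda_h^*|$ cannot be allowed to reappear on the right-hand side with constant close to or larger than one, and the Young inequalities must be orchestrated so that $\| \nabla(u - u_h^{**}) \|_0^2$ and $\| \sigma - \sigma_h^* \|_0^2$ are controlled by $\eta^2 + \hot(h)$ with small enough constants for the final estimate to collapse to the asserted form $|\lambda - \lambda_h^*| \lesssim \eta_\lambda + \hot(h) + \widetilde{\hot}(h)$.
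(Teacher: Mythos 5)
Your proposal is correct and follows essentially the same route as the paper: it starts from identity \eqref{eq::midstep}, splits the troublesome inner product through $u_h^{**}$ to extract the computable term $(\lambda_h^* u_h^* - \lambda_h u_h, u_h^{**})$, passes from $\sigma_h$ to $\sigma_h^*$ by the triangle inequality to invoke Theorem~\ref{th::aposteriori}, and absorbs $|\lambda-\lambda_h^*|$ for $h$ small; your insertion of $\pm\lambda u$ is identical to the paper's insertion of $\pm\div\sigma$ since $\div\sigma=-\lambda u$. The only cosmetic deviations are that you identify $(\sigma-\sigma_h^*,\nabla(u-u_h^{**}))$ directly with $\hot(h)$ instead of first applying Young's inequality, and your triangle-inequality manipulation for the (already higher-order) term $\lambda_h^*\|u-u_h^*\|_0^2$ is circular and unnecessary — \eqref{eq::approxpost} bounds it directly.
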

\begin{proof}
    According to \eqref{eq::midstep} we have the
    equation 
  \begin{align} \label{eq::tobound}
    \lambda - \lambda_h^* &= \| \sigma - \sigma_h \|^2_0 - \lambda_h^* \| u - u_h^* \|^2_0 - 2 (\div \sigma_h  + \lambda_h^* u_h^*, u).
  \end{align}
  Note that the second term on the right side is already of higher
  order, thus we only consider the rest. The idea is to modify the
  terms including $\sigma_h$ such that we can use the results from the
  previous theorem. By the triangle inequality we have $\| \sigma -
  \sigma_h \|_0 \le \| \sigma - \sigma^*_h \|_0 + \| \sigma_h^* -
  \sigma_h \|_0$. Since the error $\| \sigma_h^* - \sigma_h \|_0$ is
  computable and $\| \sigma - \sigma^*_h \|_0$ can be bounded by the
  estimator from the previous theorem, we are left with an estimate
  for the last term on the right hand side of \eqref{eq::tobound}.

  In contrast to the the proof of Lemma~\ref{lem::lampostapriori} we
  now add and subtract $u_h^{**}$  (and not $u_h^*$) which gives 
  \begin{align*}
    (\div\sigma_h + \lambda_h^* u_h^*, u) 
    &= (\div\sigma_h + \lambda_h^* u_h^*, u-u_h^{**}) + (\div\sigma_h + \lambda_h^* u_h^*, u_h^{**}) \\
    &= (\div\sigma_h + \lambda_h^* u_h^*, u-u_h^{**}) + ( \lambda_h^* u_h^* - \lambda_h u_h, u_h^{**}). 
  \end{align*}
  The last term is computable and will be used in the estimator. For
  the first one we have using that $u_h^{**} \in H_0^1(\Omega)$ and
  integration by parts 
  \begin{align*}
    (\div \sigma_h + \lambda_h^* u_h^*, &u - u_h^{**})\\
    =& (\div(\sigma_h - \sigma), u - u_h^{**})  + (\div\sigma + \lambda_h^* u_h^*, u - u_h^{**}),\\ 
    =& -(\sigma_h - \sigma, \nabla(u - u_h^{**}))  + (-\lambda u + \lambda_h^* u_h^*, u - u_h^{**}),\\ 
    \le&  \| \sigma_h - \sigma \|_0^2 + \|\nabla(u - u_h^{**})\|_0^2  \\
    &+ \lambda_h^* \|u_h^* - u\|_0 \|u - u_h^{**}\|_0 + |\lambda - \lambda_h^*| \|u - u_h^{**}\|_0, \\
    \le&  \| \sigma_h - \sigma \|_0^2 + \|\nabla(u - u_h^{**})\|_0^2  \\
    &+ \lambda_h^* \|u_h^* - u\|_0 \|u - u_h^{**}\|_0 + |\lambda - \lambda_h^*|^2 + \|u - u_h^{**}\|^2_0.
  \end{align*}
  The first term can be estimated as before, thus for $h$ small enough
  we have 
  \begin{align*}
    |\lambda - \lambda_h^* | 
    \lesssim & \| \sigma - \sigma_h^* \|^2_0 + \|\nabla(u - u_h^{**})\|_0^2 + \| \sigma_h - \sigma_h^* \|^2_0 \\
    & + | ( \lambda_h^* u_h^* - \lambda_h u_h, u_h^{**}) | +  \widetilde{\hot}(h),\\
    \lesssim &\eta^2 + \| \sigma_h - \sigma_h^* \|^2_0 + | ( \lambda_h^* u_h^* - \lambda_h u_h, u_h^{**}) |  + \hot(h) + \widetilde{\hot}(h).
  \end{align*}
  To show that $\hot(h)$ and $\widetilde{\hot}(h)$ are of higher order
  compared to $\mathcal{O}(\rho(h)h^{r + r'} + h^{2r'})$, one follows
  the same steps as in the proof of Theorem~\ref{th::aposteriori}.

\end{proof}

\section{Numerical examples}\label{sec::numex}

In this section we discuss some numerical examples to validate our
theoretical findings. All methods were implemented in the Finite
element Library Netgen/NGSolve, see \url{www.ngsolve.org} and
\cite{netgen}.

\subsection{Convergence on a unit square}
The first example considers the unit square domain $\Omega = (0,1)^2$.
The eigenfunction and the smallest eigenvalue of \eqref{eq::mixedeq}
is given by $u = 2 \sin(2\pi x) \sin(2\pi y)$ and $\lambda = 2 \pi^2$,
respectively. We start with an initial mesh with $|\mesh| = 32$
elements and use a uniform refinement. Note that for simplicity we
used a structured mesh for this example, thus we have $h \sim (0.5
|\mesh|)^{-1/2}$. In Table~\ref{tab::unitsquare_one} and
Table~\ref{tab::unitsquare_two} we present several errors and their
convergence rate (given in brackets) for different polynomial orders
$k = 1$ and $k = 2$.  Beside the errors we also plot the high-order
term from Theorem~\ref{th::aposteriori}, and the efficiencies 
\begin{align*}
  \eff := \frac{\eta^2}{\| \nabla u - \nabla u_h^{**} \|_0^2 + \| \sigma - \sigma^*_h\|^2_0}, 
  \qquad \textrm{and} \qquad \eff_{\lambda} := \frac{\eta_\lambda}{| \lambda - \lambda_h^*|}.
\end{align*}
Since $\Omega$ is convex we have for this example that $\rho(h) \sim
h$, thus we expect the following convergence orders (for simplicity
recalled here)
\begin{alignat*}{2}
  \| u - u_h^{**} \|_0 &\lesssim h^{k+3}, \qquad & \| \nabla(u - u_h^{**}) \|_0 &\lesssim h^{k+2}, \\
  \| \sigma - \sigma_h^* \|_0 &\lesssim h^{k+2}, &\qquad \| \lambda - \lambda_h^*\|_0 &\lesssim h^{2(k+2)}.
\end{alignat*}
In accordance to the theory all errors converge with the optimal
orders. Further the high-order term $\hot(h)$ converges faster than
the estimator $\eta$ as predicted by Theorem~\ref{th::aposteriori}.
Note that this results in an efficiency $\eff$ converging to one, i.e.
the error estimator is asymptotically exact. Also the estimator for
the error of the eigenvalue converges appropriately and shows a good
efficiency $\eff_\lambda$. The same conclusions can be made for $k=2$,
however, the error of the eigenvalues $\lambda_h$ and $\lambda_h^*$
converge so fast that rounding errors dominate on the finest meshes.
For the same reason we also do not present any numbers for
$\widetilde{\hot}(h)$ since this term converges even faster resulting
in very small numbers already on coarse meshes.

\setlength\tabcolsep{2pt}
  \begin{table}[]
    \centering  
    \begin{tabular}{c|cc|cc|cc|cc|c} 
      \toprule\toprule
      \multicolumn{10}{c}{$ k = 1$} \\
      $|\mesh|$ & \multicolumn{2}{c|}{$\|\nabla (u - u^{**}_h)\|_0$}&\multicolumn{2}{c|}{$\|\sigma - \sigma_h^{*}\|_0$}&\multicolumn{2}{c|}{$ \eta$}&\multicolumn{2}{c|}{$\hot(h)$}& $ \eff$ \\
      [0.8ex]\hline
      \rule{0pt}{2.6ex}32 &\numval{0.030715961769110837} & (--) &\numval{0.025021054990437597} & (--) &\numval{0.0373577616476502} & (--) &\numval{0.007890441608203545} & (--) &\numnr{0.8891885690866133}\\
      128 &\numval{0.003951367376824789}&~(\numeoc{2.9585646655824696}) &\numval{0.003043805895026751}&~(\numeoc{3.0391943552068046}) &\numval{0.0049204316191424815}&~(\numeoc{2.924551237058139}) &\numval{0.0005255354341360917}&~(\numeoc{3.9082461004729985}) &\numnr{0.9731726993368174}\\
      512 &\numval{0.0005016610656721495}&~(\numeoc{2.977567107743334}) &\numval{0.0003774283522188177}&~(\numeoc{3.0116016538021677}) &\numval{0.0006261069474856197}&~(\numeoc{2.974303858964148}) &\numval{3.339090568793944e-05}&~(\numeoc{3.976260910528664}) &\numnr{0.994656202093606}\\
      2048 &\numval{6.314681821193486e-05}&~(\numeoc{2.989931026029959}) &\numval{4.708826521665002e-05}&~(\numeoc{3.0027633232388204}) &\numval{7.874128981782394e-05}&~(\numeoc{2.9912168601212112}) &\numval{2.096302209912933e-06}&~(\numeoc{3.9935366045837415}) &\numnr{0.9992530572654734}\\
      8192 &\numval{7.917947945180342e-06}&~(\numeoc{2.9955115547275186}) &\numval{5.8838123415851745e-06}&~(\numeoc{3.0005444340504206}) &\numval{9.865163452675455e-06}&~(\numeoc{2.99670548488259}) &\numval{1.3119380841498311e-07}&~(\numeoc{3.9980751757368775}) &\numnr{1.0000853185274075}\\
      32768 &\numval{9.911683036745303e-07}&~(\numeoc{2.9979246249739426}) &\numval{7.354468204399958e-07}&~(\numeoc{3.0000583037233874}) &\numval{1.2343141475144359e-06}&~(\numeoc{2.9986333305584436}) &\numval{8.205841633041038e-09}&~(\numeoc{3.9989045130171643}) &\numnr{1.000154128122538}\\
      \rowcolor{mygray}
      \multicolumn{10}{c}{$ k = 2$} \\
      \rowcolor{mygray}
      $|\mesh|$ & \multicolumn{2}{c|}{$\|\nabla (u - u^{**}_h)\|_0$}&\multicolumn{2}{c|}{$\|\sigma - \sigma_h^{*}\|_0$}&\multicolumn{2}{c|}{$ \eta$}&\multicolumn{2}{c|}{$\hot(h)$}& $ \eff$ \\
      [0.8ex]\hline
      \rowcolor{mygray}
      \rule{0pt}{2.6ex}32 &\numval{0.002581914659682153} & (--) &\numval{0.0015400046637293664} & (--) &\numval{0.002920121078979839} & (--) &\numval{0.00034776553814828824} & (--) &\numnr{0.9434834762794305}\\
      \rowcolor{mygray}
      128 &\numval{0.00016047447794771287}&~(\numeoc{4.008025543105617}) &\numval{9.969587196683122e-05}&~(\numeoc{3.9492571404851375}) &\numval{0.00018702252497057297}&~(\numeoc{3.9647442460893307}) &\numval{1.0551080604375399e-05}&~(\numeoc{5.042652308418008}) &\numnr{0.9799979374749206}\\
      \rowcolor{mygray}
      512 &\numval{9.987006212391515e-06}&~(\numeoc{4.006147788802603}) &\numval{6.298832261474089e-06}&~(\numeoc{3.9843774715187474}) &\numval{1.1758069779102775e-05}&~(\numeoc{3.991488888672532}) &\numval{3.2538254805108196e-07}&~(\numeoc{5.019110078111011}) &\numnr{0.9916553371839669}\\
      \rowcolor{mygray}
      2048 &\numval{6.229950819614326e-07}&~(\numeoc{4.002759589119348}) &\numval{3.94870051806111e-07}&~(\numeoc{3.9956345341190214}) &\numval{7.361957588484842e-07}&~(\numeoc{3.9974179960640703}) &\numval{1.0119620755810393e-08}&~(\numeoc{5.006909744353087}) &\numnr{0.9962116484044109}\\
      \rowcolor{mygray}
      8192 &\numval{3.890475667232457e-08}&~(\numeoc{4.001202312833273}) &\numval{2.4700244597785798e-08}&~(\numeoc{3.998780719899514}) &\numval{4.60419228253006e-08}&~(\numeoc{3.9990694481445144}) &\numval{3.157540133027067e-10}&~(\numeoc{5.002210343098384}) &\numnr{0.9981995811249716}\\
      \rowcolor{mygray}
      32768 &\numval{2.4306210673713028e-09}&~(\numeoc{4.000549655884054}) &\numval{1.5441503566614368e-09}&~(\numeoc{3.9996401860438056}) &\numval{2.878375133267173e-09}&~(\numeoc{3.999621552063743}) &\numval{1.0132590959510407e-11}&~(\numeoc{4.961726039578611}) &\numnr{0.9991225989701048}\\
      \bottomrule\bottomrule\end{tabular}        
    \caption{Convergence of several errors for the example on the unit square with $k=1, 2$} \label{tab::unitsquare_one}
\end{table}
\begin{table}[]
  \centering  
  \begin{tabular}{c|cc|cc|cc|c} 
    \toprule\toprule
    \multicolumn{8}{c}{$ k = 1$} \\
    $|\mesh|$ & \multicolumn{2}{c|}{$\|u - u^{**}_h\|_0$}&\multicolumn{2}{c|}{$ |\lambda - \lambda^*_h|$}&\multicolumn{2}{c|}{$ \eta_{\lambda} $}& $ \eff_{\lambda}$ \\
    [1ex]\hline
    \rule{0pt}{2.6ex}32 &\numval{0.001477381078555108} & (--) &\numval{0.00045239870616242683} & (--) &\numval{0.002393131438635574} & (--) &\numnr{5.289872420139851}\\
    128 &\numval{9.911522198709917e-05}&~(\numeoc{3.897791553792137}) &\numval{7.818597364206425e-06}&~(\numeoc{5.854541178021819}) &\numval{4.11228438225566e-05}&~(\numeoc{5.86281578979026}) &\numnr{5.25961907321346}\\
    512 &\numval{6.327094844547469e-06}&~(\numeoc{3.969491514214248}) &\numval{1.2544873939646095e-07}&~(\numeoc{5.961739936406396}) &\numval{6.627973657831473e-07}&~(\numeoc{5.955228357625409}) &\numnr{5.283411925635066}\\
    2048 &\numval{3.9791673351036045e-07}&~(\numeoc{3.991004747554661}) &\numval{1.9774617499024316e-09}&~(\numeoc{5.98730437442919}) &\numval{1.0463040875356019e-08}&~(\numeoc{5.98519376038985}) &\numnr{5.2911470352699705}\\
    8192 &\numval{2.4918151754543213e-08}&~(\numeoc{3.997197600742146}) &\numval{4.497024974625674e-11}&~(\numeoc{5.458535082488823}) &\numval{1.640948923569098e-10}&~(\numeoc{5.994628059006002}) &\numnr{3.6489655557354075}\\
    32768 &\numval{1.5583350783005037e-09}&~(\numeoc{3.999119678939386}) &\numval{6.345857173073455e-11}&~(\numeoc{-0.4968441545565766}) &\numval{2.598042691334991e-12}&~(\numeoc{5.9809613862647515}) &\numnr{0.040940768449043034}\\
    \rowcolor{mygray}
    \multicolumn{8}{c}{$ k = 2$} \\
    \rowcolor{mygray}
    $|\mesh|$ & \multicolumn{2}{c|}{$\|u - u^{**}_h\|_0$}&\multicolumn{2}{c|}{$ |\lambda - \lambda^*_h|$}&\multicolumn{2}{c|}{$ \eta_{\lambda} $}& $ \eff_{\lambda}$ \\
    [1ex]\hline
    \rowcolor{mygray}
    \rule{0pt}{2.6ex}32 &\numval{7.952371554390279e-05} & (--) &\numval{4.063176124446954e-06} & (--) &\numval{1.4316239925619102e-05} & (--) &\numnr{3.523411116609598}\\
    \rowcolor{mygray}
    128 &\numval{2.4890027908139703e-06}&~(\numeoc{4.997745411675775}) &\numval{1.6264667834775537e-08}&~(\numeoc{7.964722732237317}) &\numval{5.8869669612426514e-08}&~(\numeoc{7.925912380833501}) &\numnr{3.6194818246799416}\\
    \rowcolor{mygray}
    512 &\numval{7.768669931477828e-08}&~(\numeoc{5.001756420757176}) &\numval{6.864908641546208e-11}&~(\numeoc{7.888285122836398}) &\numval{2.330582378344989e-10}&~(\numeoc{7.980690212519017}) &\numnr{3.394921185462511}\\
    \rowcolor{mygray}
    2048 &\numval{2.4273126343760045e-09}&~(\numeoc{5.0002357743551995}) &\numval{1.857358711276902e-11}&~(\numeoc{1.8859880515056524}) &\numval{1.097091014951575e-12}&~(\numeoc{7.730863480973912}) &\numnr{0.059067266236221216}\\
    \rowcolor{mygray}
    8192 &\numval{7.586214658352145e-11}&~(\numeoc{4.999835932792865}) &\numval{6.88196166720445e-11}&~(\numeoc{-1.8895673866730724}) &\numval{1.7757539716379002e-13}&~(\numeoc{2.627179599898025}) &\numnr{0.0025803020381530784}\\
    \rowcolor{mygray}
    32768 &\numval{2.381557071188913e-12}&~(\numeoc{4.993403167781228}) &\numval{2.9260505129968806e-10}&~(\numeoc{-2.0880629138830984}) &\numval{2.0699680516063582e-13}&~(\numeoc{-0.2211767889425969}) &\numnr{0.0007074273128273111}\\
    \bottomrule\bottomrule\end{tabular}      
  \caption{Convergence of several errors for the example on the unit square with $k=1, 2$.} \label{tab::unitsquare_two}
\end{table}

\subsection{Adaptive refinement on the L-shape}

For the second example we choose the L-shape domain $\Omega = (-1,1)^2
\setminus ([0,1] \times [-1,0])$ where the first eigenvalue reads as
$\lambda \approx 9.6397238440219$. Note that all digits except the
last two have been proven to be correct, see \cite{MR2178637}. In this
example the corresponding eigenfunction is singular, thus we expect a
suboptimal convergence of order $\mathcal{O}(N^{-2/3})$ on a uniform
refined mesh, where $N$ denotes the number of degrees of freedom. To
this end we solve the problem using an adaptive mesh refinement. The
refinement loop is defined as usual by 
\begin{align*}
   \mathrm{SOLVE} \rightarrow \mathrm{ESTIMATE} \rightarrow \mathrm{MARK} \rightarrow \mathrm{REFINE} \rightarrow \mathrm{SOLVE} \rightarrow \ldots
\end{align*}
and is based on the local contributions $\eta(K)$ as element-wise
refinement indicators. In the marking step we mark an element if
$\eta(K) \geq \frac{1}{4} \max\limits_{K \in \mesh} \eta(K)$. The
refinement routine then refines all marked elements plus further
elements in a closure step to guarantee a regular triangulation. In
Figure~\ref{fig::lshape} we see the error history of the post
processed eigenvalue $\lambda_h^*$, its estimator $\eta_\lambda$ and
the estimator for the eigenfunction error $\eta$ for polynomial order
$k=2,3$. We can observe an optimal convergence
$\mathcal{O}(N^{-2(k+2)})$, $\mathcal{O}(N^{-2(k+2)})$ and
$\mathcal{O}(N^{-(k+2)})$, for $|\lambda - \lambda_h^*|$,
$\eta_\lambda$ and $\eta$, respectively. Further $\eta_\lambda$ shows
a good efficiency.

\begin{figure}[h]
  \pgfplotstableread{lshape_order_1_rawdata.dat} \dataone
\pgfplotstableread{lshape_order_2_rawdata.dat} \datatwo
\pgfplotstableread{lshape_order_3_rawdata.dat} \datathree

\newcommand{\logLogSlopeTriangle}[5]
{

    \pgfplotsextra
    {
        \pgfkeysgetvalue{/pgfplots/xmin}{\xmin}
        \pgfkeysgetvalue{/pgfplots/xmax}{\xmax}
        \pgfkeysgetvalue{/pgfplots/ymin}{\ymin}
        \pgfkeysgetvalue{/pgfplots/ymax}{\ymax}

        \pgfmathsetmacro{\xArel}{#1}
        \pgfmathsetmacro{\yArel}{#3}
        \pgfmathsetmacro{\xBrel}{#1-#2}
        \pgfmathsetmacro{\yBrel}{\yArel}
        \pgfmathsetmacro{\xCrel}{\xArel}

        \pgfmathsetmacro{\lnxB}{\xmin*(1-(#1-#2))+\xmax*(#1-#2)} 
        \pgfmathsetmacro{\lnxA}{\xmin*(1-#1)+\xmax*#1} 
        \pgfmathsetmacro{\lnyA}{\ymin*(1-#3)+\ymax*#3} 
        \pgfmathsetmacro{\lnyC}{\lnyA-#4*(\lnxA-\lnxB)}
        \pgfmathsetmacro{\yCrel}{\lnyC-\ymin)/(\ymax-\ymin)} 

        \coordinate (A) at (rel axis cs:\xArel,\yArel);
        \coordinate (B) at (rel axis cs:\xBrel,\yBrel);
        \coordinate (C) at (rel axis cs:\xCrel,\yCrel);

        \pgfmathsetmacro{\convorder}{#4*2}
        \draw[#5]   (A)-- node[pos=0.5,anchor=south] {\footnotesize  $1$}
                    (B)-- 
                    (C)-- node[pos=0.5,anchor=west] {\footnotesize  $\pgfmathprintnumber[precision=1]{\convorder}$} 
                    cycle;
    }
}

\begin{tikzpicture}
  [
  scale=0.7
  ]
  \begin{groupplot}[
    group style ={group size = 2 by 1},
    name=plot1,
    title={Convergence of the lshap=example},
    scale=1.0,
    xlabel=$N$,
    legend columns=3,
    legend style={font=\small},
    xmode=log,
    ymax=1e-0,
    ymin=1e-12,
    ymode=log,
    xticklabel style={text width=3em,align=right},
    yticklabel style={text width=3em,align=right},
    %
    grid=major,
    ]
        
    \nextgroupplot[title = {$k = 2$}, legend to name={mylegend}]    
    
    \addlegendentry{$\eta$} 
    \addlegendimage{mark=square, magenta}
    \addlegendentry{$\eta_{\lambda}$} 
    \addlegendimage{mark=triangle, orange}
    \addlegendentry{$| \lambda - \lambda_h^*|$} 
    \addlegendimage{mark=o, blue}

    \logLogSlopeTriangle{0.91}{0.2}{0.72}{2}{black};
    \logLogSlopeTriangle{0.91}{0.2}{0.4}{4}{black};


    \addplot[line width=1pt, mark=o, mark size=3, color=blue] table[x=0,y=2]{\datatwo};
    \addplot[line width=1pt, mark=square, mark size=3, color=magenta] table[x=0,y=3]{\datatwo};
    \addplot[line width=1pt, mark=triangle, mark size=3, color=orange] table[x=0,y=4]{\datatwo};

    \nextgroupplot[title = {$k = 3$}, legend to name=dummy]  



    
    \logLogSlopeTriangle{0.91}{0.2}{0.7}{2.5}{black};
    \logLogSlopeTriangle{0.91}{0.2}{0.37}{5}{black};

    \addplot[line width=1pt, mark=o, mark size=3, color=blue] table[x=0,y=2]{\datathree};
    \addplot[line width=1pt, mark=square, mark size=3, color=magenta] table[x=0,y=3]{\datathree};
    \addplot[line width=1pt, mark=triangle, mark size=3, color=orange] table[x=0,y=4]{\datathree};

\end{groupplot}

\node[right=1em,inner sep=0pt] at (4,-1.5) {\pgfplotslegendfromname{mylegend}};
\end{tikzpicture}
\caption{Convergence history of the L-shape example using an adaptive refinement for $k=2,3$.}\label{fig::lshape}
\end{figure}


\section{Appendix} \label{sec::appendix}

In this section we present a proof of the super convergence estimate 
\begin{align*}
  \| \Pi^k u - u_h \|_0 \lesssim \rho(h)( h \| u - u_h \|_0 + \| \sigma - \sigma_h\|_0).
\end{align*}
For this we will follow very similar steps as in \cite{MR3918688} with
several changes in order to get the proper $h$ scaling. We define the
auxillary problem: find  $\hat u_h \in U_h$ and $\hat \sigma_h \in
\Sigma_h$ such that
\begin{subequations} \label{eq::auxprob}
  \begin{alignat}{2} 
  -(\hat \sigma_h, \tau_h) - (\div \tau_h, \hat u_h) &= 0 &&\quad \forall \tau_h \in \Sigma_h,\\
  - (\div\hat \sigma_h, v_h) &= \lambda (u,v_h)  &&\quad \forall v_h \in U_h. 
  \end{alignat}    
\end{subequations}
Note that above solution provides the property 
\begin{align} \label{eq::auxprop}
  \lambda_h (\hat u_h, u_h) = - (\div \sigma_h ,\hat u_h) = (\sigma_h, \hat \sigma_h) = -(\div\hat \sigma_h, u_h) = \lambda (u,u_h). 
\end{align}
\begin{lemma} \label{lem::app_one} Let $(\lambda,u,\sigma)$ be the
  solution of \eqref{eq::mixedeq}, and let $(\hat u_h, \hat \sigma_h)$
  be the solution of \eqref{eq::auxprob}. There holds the estimate
  \begin{align*}
    \| \Pi^k u - \hat u_h \|_0 \lesssim \rho(h) ( \| \sigma - \hat \sigma_h\|_0 + h \| \div(\sigma - \hat \sigma_h) \|_0).
  \end{align*}
\end{lemma}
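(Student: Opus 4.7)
The plan is a duality (Aubin--Nitsche) argument, analogous to standard superconvergence proofs for mixed methods of source problems, but with careful tracking of the regularity shift $\rho(h)$. I would set $f := \Pi^k u - \hat u_h \in U_h$ and introduce the continuous dual problem: find $\phi \in H^1_0(\Omega)$ and $\psi \in H(\div, \Omega)$ with $\psi = \nabla \phi$ and $-\div \psi = f$. By elliptic regularity there exists $s^* > 0$ (with $\rho(h) \simeq h^{s^*}$ by the very definition used in the paper) such that $\|\phi\|_{1+s^*} + \|\psi\|_{s^*} \lesssim \|f\|_0$.

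First I would rewrite $\|f\|_0^2$ using this duality. Since $f \in U_h$ and the canonical BDM interpolant $\Isig$ satisfies the commuting property $\div \Isig \psi = \Pi^k \div \psi$, we obtain
\begin{align*}
\|f\|_0^2 = -(\div \psi, f) = -(\Pi^k \div \psi, f) = -(\div \Isig \psi, u - \hat u_h),
\end{align*}
using $f = \Pi^k(u - \hat u_h)$ in the last step. Testing \eqref{eq::mixedeq_one} and the first equation of \eqref{eq::auxprob} with $\tau = \Isig \psi \in \Sigma_h$ and subtracting yields
\begin{align*}
\|f\|_0^2 = (\sigma - \hat \sigma_h, \Isig \psi).
\end{align*}

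Next I would split $\Isig \psi = \psi + (\Isig \psi - \psi)$. The term $(\sigma - \hat\sigma_h, \Isig \psi - \psi)$ is controlled by $\|\sigma - \hat\sigma_h\|_0 \cdot \|\Isig \psi - \psi\|_0 \lesssim \|\sigma - \hat\sigma_h\|_0 \cdot \rho(h)\, \|f\|_0$ via the standard approximation property of $\Isig$ at regularity $s^*$. For the remaining term $(\sigma - \hat\sigma_h, \psi)$, I would test the first dual equation with $\tau = \sigma - \hat\sigma_h$ to obtain $(\sigma - \hat\sigma_h, \psi) = -(\div(\sigma - \hat\sigma_h), \phi)$. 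Subtracting \eqref{eq::mixedeq_two} from the second equation of \eqref{eq::auxprob} (both share right-hand side $\lambda(u,\cdot)$) gives the orthogonality $(\div(\sigma - \hat\sigma_h), v_h) = 0$ for every $v_h \in U_h$; hence I may subtract $\Pi^k\phi$ and conclude
\begin{align*}
(\sigma - \hat\sigma_h, \psi) = -(\div(\sigma - \hat\sigma_h), \phi - \Pi^k \phi) \lesssim \|\div(\sigma - \hat\sigma_h)\|_0 \cdot h\,\rho(h)\,\|f\|_0,
\end{align*}
using $\|\phi - \Pi^k \phi\|_0 \lesssim h^{1+s^*} \|\phi\|_{1+s^*}$. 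Dividing through by $\|f\|_0$ then yields the claimed estimate.

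The main obstacle is the careful bookkeeping that produces the two different prefactors $\rho(h)$ and $h\,\rho(h)$: one must pair $\sigma - \hat\sigma_h$ against the lower-regularity object $\psi$ (producing one factor $\rho(h)$), while pairing $\div(\sigma - \hat\sigma_h)$ against the more regular potential $\phi$ (producing the extra factor $h$ from the $L^2$-projection of an $H^{1+s^*}$ function). A minor technical point is that the canonical BDM interpolant $\Isig$ must be well-defined on $\psi$, which holds since $\psi \in H^{s^*}$ with $s^* > 1/2$; otherwise a Clément-type smoothing with the same commuting property could be substituted.
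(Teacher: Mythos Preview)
Your argument is correct and follows essentially the same Aubin--Nitsche route as the paper: the paper sets up the dual problem in mixed form (with $\Theta$ playing the role of your $\psi$ and $\Psi$ of your $\phi$), uses the commuting property of $\Isig$ and the error equation to reach $(\sigma-\hat\sigma_h,\Isig\Theta)$, then splits off the interpolation error and applies the Galerkin orthogonality $(\div(\sigma-\hat\sigma_h),\Pi^k\Psi)=0$ exactly as you do. The only cosmetic differences are the mixed versus primal formulation of the dual problem and the naming of the regularity index.
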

\begin{proof}
We solve the continuous problem: Find $\Theta \in H(\div, \Omega)$ and
$\Psi \in L^2(\Omega)$ such that
\begin{subequations} \label{eq::auxprob_two}
  \begin{alignat}{2} 
  -(\Theta, \tau) - (\div\tau, \Psi) &= 0 &&\quad \forall \tau \in H(\div, \Omega),\\
  - (\div \Theta, v) &= \lambda (\Pi^k u - \hat u_h,v)  &&\quad
  \forall v \in L^2(\Omega). 
  \end{alignat} 
\end{subequations}
  Note, that we have the regularity $\Theta \in H^s(\Omega, \rr^d)$
  and $\Psi \in H^{1+s}(\Omega)$ with $s > 1/2$, and there holds the
  stability estimate (see for example \cite{MR2559736}) 
  \begin{align} \label{eq::auxstab} 
    \| \Theta \|_s + \| \Psi \|_{1+s} \lesssim \| \Pi^k u - \hat u_h \|_0.
  \end{align}   
This then gives 
\begin{align*}
  \| \Pi^k u - \hat u_h \|_0^2 &= -(\div\Theta,\Pi^k u - \hat u_h) 
  = -(\Pi^k\div\Theta,\Pi^k u - \hat u_h), \\
  &= -(\div \Isig\Theta,\Pi^k u - \hat u_h)
  = -(\div \Isig\Theta,u - \hat u_h),
\end{align*}
where we used the commuting diagram property of the
$\BDM$-interpolation operator $\Isig$ and the $L^2$ projection
$\Pi^k$, see \cite{MR3097958}. By problems \eqref{eq::auxprob} and
\eqref{eq::auxprob_two} we  then have
\begin{align*}
  -(\div\Isig\Theta,u - \hat u_h) &= (\Isig\Theta,\sigma - \hat \sigma_h) \\
  &= (\Isig\Theta - \Theta,\sigma - \hat \sigma_h)  + (\Theta,\sigma - \hat \sigma_h) \\
  &= (\Isig\Theta - \Theta,\sigma - \hat \sigma_h)  - (\div(\sigma - \hat \sigma_h), \Psi) \\
  &= (\Isig\Theta - \Theta,\sigma - \hat \sigma_h)  + (\div(\sigma - \hat \sigma_h), \Pi^k \Psi - \Psi),
\end{align*}
where the last step followed by  $(\div(\sigma - \hat \sigma_h), \Pi^k
\Psi) = 0$. By the interpolation properties of $\Pi^k$ and
$\Isig$ and the stability \eqref{eq::auxstab} we conclude 
\begin{align*}
  \| \Pi^k u - \hat u_h \|_0^2 &\lesssim \| \sigma - \hat \sigma_h \|_0 h^s \| \Theta \|_s + \| \div(\sigma - \hat \sigma_h) \|_0 h^{1+s} \| \Psi \|_{1+s} \\
  &\lesssim h^s(\| \sigma - \hat \sigma_h \|_0  + h \| \div(\sigma - \hat \sigma_h) \|_0)\| \Pi^k u - \hat u_h \|_0.
\end{align*}
\end{proof}
\begin{lemma}
  Let $(\lambda,u,\sigma)$ be the solution of
  \eqref{eq::mixedeq}, $(\lambda_h,u_h,\sigma_h)$ be the solution of
  \eqref{eq::mixedeqapprox} and let $(\hat u_h, \hat \sigma_h)$ be the
  solution of \eqref{eq::auxprob}.
  There holds the estimate
  \begin{align*}
    \| \sigma - \hat \sigma_h \|_0  + h \| \div(\sigma - \hat \sigma_h) \|_0 \lesssim h \| u_h - \hat u_h \|_0 + h \| u - u_h \|_0 + \| \sigma - \sigma_h\|_0.
  \end{align*}
\end{lemma}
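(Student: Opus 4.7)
The plan is to split the left-hand side into a divergence part and an $L^2$-flux part. The divergence term is controlled directly, while for the flux term I recognize the difference $(\hat u_h - u_h, \hat\sigma_h - \sigma_h)$ as the exact BDM mixed FE approximation of a Poisson source problem whose source can be controlled in $H^{-1}$ by $\|\sigma-\sigma_h\|_0$.

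\emph{Divergence part.} From $-\div\sigma=\lambda u$ in \eqref{eq::mixedeq_two} together with $\div\Sigma_h\subseteq U_h$, the auxiliary problem \eqref{eq::auxprob} forces $-\div\hat\sigma_h=\lambda\Pi^k u$. Hence $\div(\sigma-\hat\sigma_h)=-\lambda(u-\Pi^k u)$, and since $\Pi^k$ is the $L^2$-best approximation onto $U_h\ni u_h$, one has $\|(I-\Pi^k)u\|_0\le\|u-u_h\|_0$, so $h\|\div(\sigma-\hat\sigma_h)\|_0\lesssim h\|u-u_h\|_0$.

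\emph{Flux part.} The triangle inequality $\|\sigma-\hat\sigma_h\|_0\le\|\sigma-\sigma_h\|_0+\|e_\sigma\|_0$ with $e_\sigma := \hat\sigma_h-\sigma_h$ reduces the task to bounding $\|e_\sigma\|_0$. Setting $e_u := \hat u_h - u_h$ and subtracting \eqref{eq::mixedeqapprox} from \eqref{eq::auxprob}, the pair $(e_u, e_\sigma)$ satisfies the discrete mixed system
\begin{align*}
    (e_\sigma,\tau_h) + (\div\tau_h, e_u) &= 0, & -(\div e_\sigma, v_h) &= (f, v_h)
\end{align*}
with $f := \lambda u - \lambda_h u_h \in L^2$. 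Crucially, subtracting $-\div\sigma=\lambda u$ from $-\div\sigma_h=\lambda_h u_h$ yields $f = \div(\sigma_h-\sigma)$, which identifies $(e_u, e_\sigma)$ as the exact BDM mixed approximation of the Poisson problem $-\Delta\phi = f$ in $H_0^1(\Omega)$, with continuous flux $\chi := \nabla\phi$.

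Because $f=\div(\sigma_h-\sigma)$, integration by parts against any $v \in H^1_0(\Omega)$ with $\|v\|_1 = 1$ gives $(f,v) = -(\sigma_h-\sigma,\nabla v) \le \|\sigma-\sigma_h\|_0$, so $\|f\|_{-1}\lesssim\|\sigma-\sigma_h\|_0$ and in turn $\|\chi\|_0 \le \|\phi\|_1 \lesssim \|\sigma-\sigma_h\|_0$. Combined with the regularity $\|\phi\|_{1+s}\lesssim \|f\|_0\lesssim\|u-u_h\|_0+|\lambda-\lambda_h|$ and the standard best-approximation property for the BDM mixed FE (tested against the discrete-divergence-free $\Isig\chi - e_\sigma \in \Sigma_h$, which satisfies $\div(\Isig\chi - e_\sigma) = \Pi^k\div\chi - \div e_\sigma = -\Pi^k f - (-\Pi^k f) = 0$),
\begin{align*}
    \|\chi - e_\sigma\|_0 \le \|\chi - \Isig\chi\|_0 \lesssim h^s\|\phi\|_{1+s},
\end{align*}
I obtain $\|e_\sigma\|_0 \le \|\chi\|_0 + \|\chi - e_\sigma\|_0 \lesssim \|\sigma-\sigma_h\|_0 + h^s(\|u-u_h\|_0 + |\lambda-\lambda_h|)$. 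For $s \ge 1$ this becomes $\|\sigma-\sigma_h\|_0 + h\|u-u_h\|_0 + h|\lambda-\lambda_h|$, and \eqref{eq::lamminlamh} together with Young absorbs $h|\lambda-\lambda_h|$ into the preceding terms for $h$ small enough, proving the claim (the $h\|e_u\|_0$ slot on the right is then superfluous but kept for symmetry with later applications). The main obstacle — and the step requiring the most care — is obtaining the sharp bound $\|f\|_{-1}\lesssim\|\sigma-\sigma_h\|_0$: without exploiting the identity $f = \div(\sigma_h-\sigma)$ one can only reach $\|f\|_{-1}\lesssim\|f\|_0\lesssim\|u-u_h\|_0$, which loses the crucial $h$-factor on the eigenfunction $L^2$-error.
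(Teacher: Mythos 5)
Your argument splits cleanly into a divergence part and a flux part, and the divergence part is correct and in fact sharper than the paper's: identifying $-\div\hat\sigma_h=\lambda\Pi^k u$ and hence $\div(\sigma-\hat\sigma_h)=-\lambda(I-\Pi^k)u$ gives $h\|\div(\sigma-\hat\sigma_h)\|_0\lesssim h\|u-u_h\|_0$ directly, whereas the paper routes this through $|\lambda-\lambda_h|$ and picks up an extra $\|\sigma-\sigma_h\|_0$. Your flux part is also a genuinely different route: the paper estimates $\|\sigma_h-\hat\sigma_h\|_0$ by a duality over $\Sigma_h$ (and, as written, invokes the inequality $\|\div\tau_h\|_0\lesssim h\|\tau_h\|_0$, which is the reverse of the standard inverse estimate), while you recognize $(e_u,e_\sigma)$ as the exact mixed approximation of a Poisson problem with datum $f=\div(\sigma_h-\sigma)$ and exploit $\|f\|_{-1}\lesssim\|\sigma-\sigma_h\|_0$ together with the classical $L^2$ flux-error bound $\|\chi-e_\sigma\|_0\le\|\chi-\Isig\chi\|_0$.

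However, there is a genuine gap: your flux estimate only delivers $\|e_\sigma\|_0\lesssim\|\sigma-\sigma_h\|_0+h^s\bigl(\|u-u_h\|_0+|\lambda-\lambda_h|\bigr)$, and you pass to the claimed bound only under the assumption $s\ge 1$. The lemma is stated, and needed, in the general setting $s>1/2$ of the paper; indeed the whole point of the appendix is to justify the superconvergence estimate on general polygons, and the second numerical example is the L-shape, where the relevant regularity index is $s=2/3<1$. For $1/2<s<1$ the term $h^s\|u-u_h\|_0$ is strictly larger than $h\|u-u_h\|_0$ and cannot be absorbed into $\|\sigma-\sigma_h\|_0$ or $h\|u_h-\hat u_h\|_0$ without invoking a priori rates that are not available inside this purely stability-type lemma. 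The loss occurs precisely at the step $\|\chi-e_\sigma\|_0\le\|\chi-\Isig\chi\|_0\lesssim h^s\|\phi\|_{1+s}\lesssim h^s\|f\|_0$: the elliptic shift $\|\phi\|_{1+s}\lesssim\|f\|_0$ saturates at the domain-dependent $s$, so the interpolation error of $\chi$ cannot supply a full power of $h$ on non-convex domains. To prove the lemma as stated you would need a bound on $\|\chi-e_\sigma\|_0$ (or directly on $\|\sigma_h-\hat\sigma_h\|_0$) that trades the missing $h^{1-s}$ against the discrete quantity $\|u_h-\hat u_h\|_0$ appearing on the right-hand side — which your argument never uses; as it stands, your proof establishes the lemma only for convex domains.
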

\begin{proof}
  We start with the estimate of the divergence term.
  By the triangle inequality we have 
  \begin{align*}
    \| \div(\sigma - \hat \sigma_h) \|_0 &\le \| \div(\sigma - \sigma_h) \|_0 + \| \div(\sigma_h - \hat \sigma_h) \|_0,
\end{align*}
Using $\div \Sigma_h = U_h$  gives 
\begin{align*}
  \| \div(\sigma_h - \hat \sigma_h) \|_0 &= \sup\limits_{v_h \in U_h} \frac{(\div(\sigma_h - \hat \sigma_h), v_h)}{\|v _h \|_0} \\
  &= \sup\limits_{v_h \in U_h} \frac{(\lambda_h u_h - \lambda u, v_h)}{\|v _h \|_0} \le  \| \lambda_h u_h - \lambda u \|_0,
\end{align*}
thus since also $\| \div(\sigma - \sigma_h) \|_0 = \| \lambda u  -
\lambda_h u_h\|_0$ we have with \eqref{eq::lamminlamh} and a small
enough mesh size $h$ that 
\begin{align*}
  \| \div(\sigma_h - \hat \sigma_h) \|_0 \lesssim \| \lambda_h u_h - \lambda u \|_0 
  &\lesssim  \| u -  u_h\|_0 + |\lambda - \lambda_h|  \\
  &\lesssim \| u -  u_h\|_0 + \|\sigma - \sigma_h \|_0.
\end{align*}
For the second term we proceed similarly. The triangle inequality
gives $\| \sigma - \hat \sigma_h \|_0 \le \| \sigma -  \sigma_h \|_0
+ \| \sigma_h - \hat \sigma_h \|_0$. For the latter we then have
  \begin{align*}
    \| \sigma_h - \hat \sigma_h \|_0 &= \sup\limits_{\tau_h \in \Sigma_h} \frac{( \sigma_h - \hat \sigma_h, \tau_h)}{\| \tau_h \|_0} 
    = \sup\limits_{\tau_h \in \Sigma_h} \frac{( u_h - \hat u_h, \div\tau_h)}{\| \tau_h \|_0} \\
    &= \sup\limits_{\tau_h \in \Sigma_h} \frac{\| u_h - \hat u_h \|_0 \| \div \tau_h \|_0 }{\| \tau_h \|_0} \lesssim h \| u_h - \hat u_h \|_0
  \end{align*}
  where we used that $\| \div \tau_h \|_0 \lesssim h \| \tau_h \|_0$
  which follows from standard scaling arguments.
\end{proof}
\begin{lemma}
  Let $(\lambda,u,\sigma)$ be the solution of \eqref{eq::mixedeq},
  $(\lambda_h,u_h,\sigma_h)$ be the solution of
  \eqref{eq::mixedeqapprox} and let $(\hat u_h, \hat \sigma_h)$ be the
  solution of \eqref{eq::auxprob}. There holds the estimate 
  \begin{align*}
    \| u_h - \hat u_h \|_0 \lesssim \| \Pi^k u - \hat u_h \|_0.
  \end{align*}
\end{lemma}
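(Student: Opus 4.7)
The plan is to expand $\hat u_h$ in an $L^2$-orthonormal basis of discrete eigenfunctions of \eqref{eq::mixedeqapprox} and exploit both the spectral gap at the simple eigenvalue $\lambda$ and identity \eqref{eq::auxprop}.

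First I would fix an $L^2(\Omega)$-orthonormal basis $\{\phi_{h,i}\}_i$ of $U_h$ consisting of discrete eigenfunctions of \eqref{eq::mixedeqapprox}, numbered so that $\phi_{h,1} = u_h$ and $\lambda_{h,1} = \lambda_h$. Viewing the auxiliary problem \eqref{eq::auxprob} as the action of the discrete source-solution operator $T_h$ (self-adjoint on $L^2$, with eigenpairs $(\lambda_{h,i}^{-1},\phi_{h,i})$) applied to the right-hand side $\lambda u$, self-adjointness yields the expansions
\begin{equation*}
  \hat u_h = \sum_{i}\frac{\lambda}{\lambda_{h,i}}(u,\phi_{h,i})\,\phi_{h,i},\qquad \Pi^k u = \sum_{i}(u,\phi_{h,i})\,\phi_{h,i}.
\end{equation*}
Setting $\alpha := (\hat u_h, u_h)$ and $\beta := (u, u_h)$, identity \eqref{eq::auxprop} provides the consistency $\lambda_h \alpha = \lambda \beta$. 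Splitting orthogonally in $U_h$ with respect to $u_h$ then yields the parallel identities
\begin{equation*}
  \|u_h - \hat u_h\|_0^2 = (1-\alpha)^2 + \sum_{i\ne 1}\Bigl(\tfrac{\lambda}{\lambda_{h,i}}\Bigr)^2(u,\phi_{h,i})^2,
\end{equation*}
\begin{equation*}
  \|\Pi^k u - \hat u_h\|_0^2 = (\beta-\alpha)^2 + \sum_{i\ne 1}\Bigl(1-\tfrac{\lambda}{\lambda_{h,i}}\Bigr)^2(u,\phi_{h,i})^2.
\end{equation*}

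For the sums over $i\ne 1$, I would invoke the spectral gap of the simple eigenvalue $\lambda$: for $h$ sufficiently small there is a uniform $\delta>0$ such that $|1-\lambda/\lambda_{h,i}| \ge \delta$ for every $i\ne 1$, and simultaneously $\lambda/\lambda_{h,i}$ remains uniformly bounded. A termwise comparison then controls the $\hat u_h^\perp$-sum by a constant multiple of the analogous sum in $\|\Pi^k u - \hat u_h\|_0^2$, giving the orthogonal contribution to the desired inequality.

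For the scalar part, I would decompose $1-\alpha = (1-\beta) + (\beta-\alpha)$. The term $|\beta-\alpha|$ is precisely the $u_h$-coefficient of $\Pi^k u - \hat u_h$ and is bounded by $\|\Pi^k u - \hat u_h\|_0$. The real obstacle is the contribution $1-\beta$: it is a genuinely higher-order quantity, equal to $\tfrac{1}{2}\|u-u_h\|_0^2$ by the normalization $\|u\|_0=\|u_h\|_0=1$, which is not directly controlled by $\|\Pi^k u - \hat u_h\|_0$ because $\|u-u_h\|_0$ corresponds to the non-superconvergent eigenfunction error. To deal with it I would combine the identity $1-\beta = (u_h - \Pi^k u, u_h)$ (using $u_h\in U_h$) with the triangle bound $\|u_h - \Pi^k u\|_0 \le \|u_h - \hat u_h\|_0 + \|\Pi^k u - \hat u_h\|_0$ and the smallness $|1-\beta| \le \tfrac{1}{2}\|u-u_h\|_0^2$. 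Young's inequality then produces a bound of the form
\begin{equation*}
  (1-\beta)^2 \le \varepsilon(h)\|u_h - \hat u_h\|_0^2 + C\|\Pi^k u - \hat u_h\|_0^2 + \text{h.o.t.},
\end{equation*}
with $\varepsilon(h) \to 0$ as $h\to 0$ inherited from \eqref{eq::approx_primal}. Absorbing $\varepsilon(h)\|u_h - \hat u_h\|_0^2$ into the left-hand side for $h$ small enough, and combining with the orthogonal estimate, closes the argument. The essential subtlety is precisely this absorption step, which turns the quadratic smallness of $\|u-u_h\|_0^2$ into a controllable prefactor and breaks the apparent circularity.
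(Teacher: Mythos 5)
Your overall strategy---expanding $\hat u_h$ and $\Pi^k u$ in the discrete eigenbasis, using self-adjointness of the discrete solution operator, the spectral gap at the simple eigenvalue for the components orthogonal to $u_h$, and \eqref{eq::auxprop} for the component along $u_h$---is precisely the argument of the references the paper defers to for this lemma (the paper itself gives no details beyond citing Lemma~11 of \cite{MR3918688} and Lemma~6.3 of \cite{MR3647956}). The expansions, the two parallel identities, the termwise comparison over $i\neq 1$ via the gap $|1-\lambda/\lambda_{h,i}|\ge\delta$, and the bound $|\beta-\alpha|=|(\Pi^k u-\hat u_h,u_h)|\le\|\Pi^k u-\hat u_h\|_0$ are all correct.

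The gap is in the treatment of $1-\beta=\tfrac12\|u-u_h\|_0^2$. The absorption you describe does not close: however you split $(1-\beta)^2$ by Young's inequality, after moving $\varepsilon(h)\|u_h-\hat u_h\|_0^2$ to the left you retain an additive residual of size $\|u-u_h\|_0^4$ (or $\|u-u_h\|_0^2$ if you absorb the linear-in-$\|u_h-\hat u_h\|_0$ term instead), and this residual is \emph{not} controlled by $\|\Pi^k u-\hat u_h\|_0^2$: by the Pythagorean identity $\|u-u_h\|_0^2=\|u-\Pi^k u\|_0^2+\|\Pi^k u-u_h\|_0^2$, the error $\|u-u_h\|_0$ contains the pure projection error $\|u-\Pi^k u\|_0$, which bears no pointwise relation to $\|\Pi^k u-\hat u_h\|_0$ and cannot be absorbed into $\|u_h-\hat u_h\|_0$ either (that route is circular). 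What your argument actually proves is $\|u_h-\hat u_h\|_0\lesssim\|\Pi^k u-\hat u_h\|_0+\|u-u_h\|_0^2$, not the clean statement. A sharper bookkeeping uses \eqref{eq::lamminlamh} together with $\beta-\alpha=\beta(\lambda_h-\lambda)/\lambda_h$ (a consequence of your consistency relation $\lambda_h\alpha=\lambda\beta$) to write $1-\beta=\tfrac{1}{2\lambda_h}\|\sigma-\sigma_h\|_0^2+\tfrac{1}{2\beta}(\Pi^k u-\hat u_h,u_h)$, trading the residual for $\|\sigma-\sigma_h\|_0^2$; either form of the extra quadratic term propagates harmlessly through the subsequent appendix lemma (where, multiplied by $h$, it is dominated by $h\|u-u_h\|_0$, resp.\ $\|\sigma-\sigma_h\|_0$) and hence does not damage the final super-convergence corollary, but it is not eliminated. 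You should either state and prove the lemma with the explicit extra term and verify it washes out downstream, or supply the missing ingredient that removes it; as written, the absorption step you rely on does not yield the inequality as literally stated.
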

\begin{proof}
  Using equation \eqref{eq::auxprop} the proof follows with exactly
  the same steps as in the proof of Lemma 11 in \cite{MR3918688} or Lemma
  6.3 in \cite{MR3647956}. 
\end{proof}
Combining above results we have the super convergence property.
\begin{corollary}
  Let $(\lambda,u,\sigma)$ be the solution of \eqref{eq::mixedeq},
  $(\lambda_h,u_h,\sigma_h)$ be the solution of
  \eqref{eq::mixedeqapprox} and let $(\hat u_h, \hat \sigma_h)$ be the
  solution of \eqref{eq::auxprob}. For $h$ small enough there holds
  the super convergence property
  \begin{align*}
    \| \Pi^k u - u_h \|_0 \lesssim \rho(h)( h \| u - u_h \|_0 + \| \sigma - \sigma_h\|_0).
  \end{align*}
\end{corollary}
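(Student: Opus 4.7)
The plan is to chain the three lemmas together, using the auxiliary solution $(\hat u_h, \hat \sigma_h)$ as a bridge between the discrete solution $(u_h, \sigma_h)$ of \eqref{eq::mixedeqapprox} and the projected continuous eigenfunction $\Pi^k u$. First, I would split the target quantity by the triangle inequality,
\begin{align*}
\| \Pi^k u - u_h \|_0 \le \| \Pi^k u - \hat u_h \|_0 + \| \hat u_h - u_h \|_0,
\end{align*}
and then observe that, thanks to the third lemma, the second summand is controlled by the first. Hence it suffices to derive the claimed bound for $\| \Pi^k u - \hat u_h \|_0$.

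Next, I would feed the second lemma into the first: Lemma~\ref{lem::app_one} yields
\begin{align*}
\| \Pi^k u - \hat u_h \|_0 \lesssim \rho(h)\bigl( \| \sigma - \hat \sigma_h\|_0 + h \| \div(\sigma - \hat \sigma_h)\|_0 \bigr),
\end{align*}
and inserting the estimate of the intermediate lemma produces
\begin{align*}
\| \Pi^k u - \hat u_h \|_0 \lesssim \rho(h)\bigl( h \| u_h - \hat u_h \|_0 + h \| u - u_h \|_0 + \| \sigma - \sigma_h \|_0 \bigr).
\end{align*}
Now I would apply the third lemma once more to replace $\| u_h - \hat u_h \|_0$ on the right-hand side by a multiple of $\| \Pi^k u - \hat u_h \|_0$, obtaining an inequality of the form
\begin{align*}
\| \Pi^k u - \hat u_h \|_0 \lesssim \rho(h) h \| \Pi^k u - \hat u_h \|_0 + \rho(h)\bigl( h \| u - u_h \|_0 + \| \sigma - \sigma_h \|_0 \bigr).
\end{align*}

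The key technical step is this absorption argument: since $\rho(h) h \to 0$ as $h \to 0$, for $h$ small enough the factor $\rho(h) h$ in front of $\| \Pi^k u - \hat u_h \|_0$ can be made smaller than any fixed constant, so that this term can be moved to the left-hand side. This yields the desired bound for $\| \Pi^k u - \hat u_h \|_0$, and combining with the triangle inequality and Lemma~3 concludes the proof. The main (minor) obstacle is simply keeping track of the $h$-powers so that the absorption is legitimate; the conceptual work has already been done in the preceding three lemmas.
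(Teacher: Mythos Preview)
Your proof is correct and is precisely the intended combination of the three lemmas; the paper itself omits all details here and simply states that the corollary follows by ``combining above results''. In particular, your absorption argument via the factor $\rho(h)h$ is exactly what justifies the qualifier ``for $h$ small enough'' in the statement.
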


\printbibliography


\section*{Data availability}

All datasets generated during the current study are available in the
repository \url{https://doi.org/10.5281/zenodo.6417423}.

\section*{Funding} 
This work was supported by the Academy of Finland (Decision 324611).

\end{document}